\documentclass[11pt]{article}


\usepackage[margin=1in]{geometry}  
\usepackage{graphicx}              
\usepackage{amsmath}               
\usepackage{amsfonts}              
\usepackage{amsthm}                
\usepackage{caption}
\usepackage{enumerate}

\usepackage{amssymb}
\usepackage{amscd} 
\usepackage{epsfig}

\usepackage{hyperref}

\newtheorem{thm}{Theorem}[section]
\newtheorem{lem}[thm]{Lemma}

\newcommand{\RR}{\mathbb{R}}      

\newcommand{\vb}[3]{\begin{pmatrix}#1\\\\#2\\\\#3\end{pmatrix}}

\numberwithin{equation}{section}

\begin{document}

\nocite{*}

\title{On stochastic perturbations of dynamical systems with a ``rough" symmetry. Hierarchy of Markov chains.}

\author{Mark Freidlin  \\
Department of Mathematics\\
University of Maryland\\
College Park, Maryland 20742-4015}

\date{}

\maketitle

\begin{abstract}
We consider long-time behavior of dynamical systems perturbed by a small noise. Under certain conditions, a slow component of such a motion, which is most important for long-time evolution, can be described as a motion on the cone of invariant measures of the non-perturbed system. The case of a finite number of extreme points of the cone is  considered in this paper. As is known, in the generic case, the long-time evolution can be described by a hierarchy of cycles defined by the action functional for corresponding stochastic processes. This, in particular, allows to study metastable distributions and such effects as stochastic resonance. If the system has some symmetry in the logarithmic asymptotics of transition probabilities (rough symmetry),the hierarchy of cycles should be replaced by a hierarchy of Markov chains and their invariant measures.
\end{abstract}

\section{Limiting motion on the cone of invariant measures}

Together with a dynamical system
\begin{equation}\label{01}
\dot{X}_t=b(X_t), \quad X_0 =x \in \RR^n, 
\end{equation}
in $\RR^n$, consider a perturbed system 
\begin{equation}\label{02}
\dot{X}^{\epsilon,\delta}_t=b(X^{\epsilon,\delta}_t)+\sqrt{\epsilon}\sigma(X^{\epsilon,\delta}_t)\dot{W}_t+\delta\beta(X^{\epsilon,\delta}_t), \quad X^{\epsilon,\delta}_0 =x \in \RR^n.
\end{equation}
Here $W_t$ is the Wiener process in $\RR^n$, $\sigma(x)$ is  $n \times n$-matrix, $b(x)$ and $\beta(x)$ are vector fields, $0 \le \epsilon \ll 1$, $0 \le \delta \ll 1$. We assume that components of the vector fields $b(x)$ and $\beta(x)$ have continuous bounded derivatives as well as the entries of the diffusion matrix $a(x)=(a_{ij}(x))=\sigma(x)\sigma^*(x)$; $\det a(x)>0$. We are interested in the long time behavior of the perturbed system when at least one of the parameters $\epsilon$ and $\delta$ is small. The invariant measure of a system is the simplest characteristic of its long-time behavior. Even if there exists a unique normalized invariant measure of system \eqref{01}, the perturbed system can have many normalized invariant measures. But, under mild additional assumptions, all these measures converge in the weak topology to the (unique) invariant measure of the original system as perturbations tend to zero. Thus, in this case, the long-time behavior of the perturbed system is, roughly speaking, close to the behavior of the non-perturbed system.

Situation is different if the original system has multiple invariant measures. Roughly speaking, the perturbed system, first, approaches the invariant measure ``closest" to the initial point, but then, because of perturbations, it moves, in general, to the support of another invariant measure. The main characteristic of long-time behavior of the perturbed system is given by the slow component evolution which is, actually, a motion on the set of invariant measures of the original system. This set forms a cone. In many important problems, the slow evolution converges, in an appropriate time scale, to a limiting motion. The position of the limiting motion at the cone together with the invariant measure corresponding to this position are main characteristics of the long-time evolution.

As is known (The Krein-Milman theorem \cite{06}), each cone is a convex envelope of its extreme points. So that to describe the whole cone, it is sufficient to parametrize the extreme points set. Then any point of the cone can be characterized by a measure on the extreme points set. Such an approach is convenient in many perturbation theory problems for both deterministic and stochastic perturbations. Say, if \eqref{01} is a one degree of freedom Hamiltonian system with one well Hamiltonian $H(x)$, $x \in \RR^2$, $\lim_{|x| \rightarrow \infty} H(x)=\infty$, extreme invariant measures can be parametrized by the value of the energy: on each non-empty set $\{x \in \RR^2 : H(x)=h\}$, just one normalized invariant measure is concentrated. Slow component at the long-time evolution of the perturbed system in this case is described by the classical averaging principle. If the Hamiltonian has several wells, the set of extreme normalized invariant measures can be parametrized by points of a graph \cite{01,05}.

We  consider in this paper the simplest case when system \eqref{01} has just a finite number of normalized extreme invariant measures. For example, this will be the case when \eqref{01} has a finite number of asymptotically stable equilibriums or limit cycles $K_1,...,K_l$ and each point of $\RR^n$, besides the separatrices, is attracted to one of $K_i$ (invariant measures concentrated on separatrix surfaces are not important in this case). Then the trajectory started at a point $x$ belonging to the basin of $K_{i(x)}$, first, approaches $K_{i(x)}$. We assume that the diffusion matrix $a(x)$ is non-degenerate, so that the perturbed trajectory, sooner or later, leaves the basin of $K_{i(x)}$ and switches to another attractor $K_j$, then to another, and so on. The transitions occur due to the diffusion term in \eqref{02}. The drift perturbation $\delta\beta(x)$ for small enough $\delta \ge 0$ is not essential for the transitions. So that, for brevity, we omit the deterministic perturbation and consider the perturbed system
\begin{equation}\label{03}
\dot{X}^\epsilon_t=b(X^\epsilon_t)+\sqrt{\epsilon}\sigma(X^\epsilon_t)\dot{W}_t,\quad X^\epsilon_0=x \in \RR^n.
\end{equation}

Denote by $\frac{1}{\epsilon}S_{0T}(\varphi)$ the action functional \cite{05} for the family of processes $X^\epsilon_t$ as $\epsilon \downarrow 0$ in the space $C_{0T}$ of continuous functions $\varphi: [0,T] \mapsto \RR^n$. For absolutely continuous $\varphi \in C_{0T}$,
\[S_{0T}(\varphi)=\frac{1}{2}\int_0^T\left(a^{-1}(\varphi_s)(\dot{\varphi}_s-b(\varphi_s))\cdot (\dot{\varphi}_s-b(\varphi_s))\right)ds.\]

Let $V(x,y)=\inf\{S_{0T}(\varphi): \varphi \in C_{0T}, \varphi_0=x, \varphi_T=y, T \ge 0\}$, $x, y \in \RR^n$. If $b(x)=-\nabla U(x)$ and $a(x)$ is the unit matrix, $V(x,y)$ can be expressed through the potential $U(x)$. If $O$ is an asymptotically stable equilibrium, the function $V(y)=V(O,y)$ is called quasi-potential (with respect to the point $O$ and perturbations defined by the matrix $a(x)$). As is known (see \cite{05}), asymptotics of many interesting characteristics of system \eqref{03} can be expressed through the function $V(x,y)$. In particular, if system \eqref{01} has a finite number of asymptotically stable equilibriums $K_1,...,K_l$ (or asymptotically stable attractors of more general form supporting a unique invariant measure (see Ch.6 in \cite{05}) and each $x \in \RR^n$ besides the points of the separatrix surfaces is attracted to one of $K_i$, then the asymptotic behavior of the invariant measure $\mu^\epsilon$ of the process $X^\epsilon_t$ defined by \eqref{03} can be described through the numbers $V_{ij}=V(K_i,K_j)$ (\cite{05}, Ch.6). Under certain additional assumptions, the metastable states (distributions) for various initial points and time scales can be found using the numbers $V_{ij}$, $i,j \in \{1,...,l\}$. The assumptions, roughly speaking, are:
(1) for any $x \in \RR^n$ not belonging to a separatrix a unique $K_{i(x)}$ exists such that $V(x,K_{i(x)})=0$ and (2) minima of certain finite sums of numbers $V_{ij}$ over some finite sets are achieved at a unique point of this set (see Section 3). In particular, it is assumed that $\min_{j:j \neq i} V_{ij}=V_{ij^*(i)}$ for each $i \in \{1,...,l\}$ is achieved at one point $j^*(i)$. These assumptions are in a sense robust, but there are interesting examples where they are not satisfied. For instance, an exact symmetry in \eqref{03} can lead to non-uniqueness. Actually, one can have non-uniqueness of $i(x)$ which is preserved for all diffusion matrices $a(x)$ (see the next Section).

The numbers $V_{ij}$ define logarithmic asymptotics of certain characteristics of system \eqref{03} as $\epsilon \downarrow 0$. In particular, the equality $\min_{j: j \neq i} V_{ij}=V_{ij_1}=V_{ij_2}$ guarantees that logarithmic asymptotics as $\epsilon \downarrow 0$ of probabilities of events $\mathcal{E}_m=\{$first exit from the domain of attraction of $K_i$ will be to the domain of $K_{j_m}\}$, $m \in \{1,2\}$, coincide. But these probabilities are not equivalent as in the case of exact symmetry. This is why we call it rough or log-asymptotic symmetry. It turns out that many important characteristics of system \eqref{03} as $0 <\epsilon \ll 1$ still can be calculated using the numbers $V_{ij}$ even if there are rough symmetries. To calculate the asymptotics of other characteristics for a system with rough symmetries, it is not enough just to know $V_{ij}$: one should take into account the pre-exponential factors.

In the next Section, we consider a system with rough symmetry and calculate all metastable states (distributions). In Section 3, we describe a general construction of a hierarchy of Markov chains which is a generalization of the hierarchy of cycles \cite{03,04,05} introduced for system without rough symmetries.

\section{Nearly-Hamiltonian system with rough symmetry}

Consider a dynamical system in $\RR^2$
\begin{equation}\label{04}
\dot{X}_t=b(X_t),\quad X_0=x \in \RR^2,
\end{equation}
shown in Figure 1.

\begin{figure}[h]
\centering
\begin{picture}(310, 210)
    \put(0,0){\includegraphics[scale=0.6]{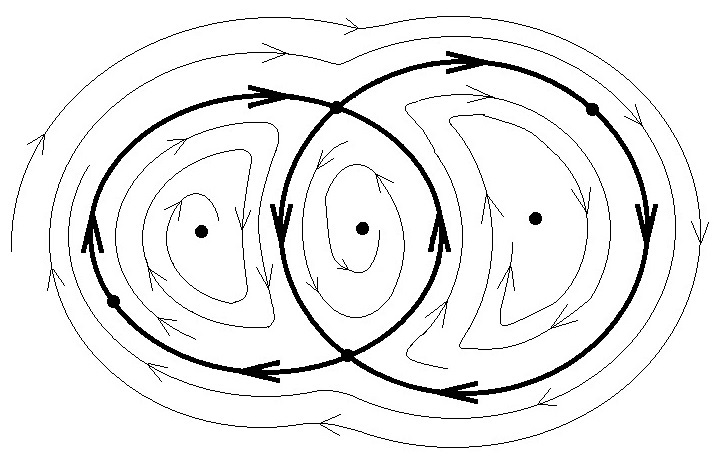}}
	\put(90,90){$O_1$}	
	\put(163,92){$O_2$}	
	\put(242,96){$O_3$}	
	\put(144,166){$O_4$}	
	\put(153,33){$O_5$}
	\put(42,57){$M$}
	\put(271,160){$N$}
	\put(105,50){\Large $\bf G_1$}	
	\put(155,60){\Large $\bf G_2$}	
	\put(210,46){\Large $\bf G_3$}	
	\put(235,185){\Large $\bf G_4$}			
\end{picture}
\caption*{Figure 1}
\end{figure}

The system has two saddle points $O_4$ and $O_5$ and three asymptotically stable equilibriums $O_1$, $O_2$, and $O_3$. The separatrices connecting the saddle points divide the plane into four domains: $G_i$ attracted to $O_i$, $i=1,2,3$, and the exterior domain $G_4$ attracted to the curve $O_5MO_4N$ formed by two separatrices. Assume for brevity that outside a (large enough) circle centered at the origin $O$, the projection of $b(x)$ on the radius $Ox$ is directed to the origin and the length of the projection is bounded from below by a positive number. Together with the system \eqref{04}, we consider its perturbation \eqref{03}. The last assumption provides the positive recurrency of the process $X^\epsilon_t$ defined by \eqref{03} as well as the existence and uniqueness of a stationary distribution $\mu^\epsilon$ of the diffusion process \eqref{03}.

Denote by $\Pi$ the union of four separatrices connecting $O_4$ and $O_5$. Let 
\[
\alpha_i=\inf\{S_{0T}(\varphi) : \varphi \in C_{0T}, \varphi_0=O_i, \varphi_T \in \Pi, T \ge 0 \}.\]

Assume that all $\alpha_i$, $i \in \{1,2,3\}$, are different, say, $\alpha_1 <\alpha_2<\alpha_3$. It is easy to see that $V(x,y)=0$ for $x,y \in \Pi$ (if $V(x,y)=V(y,x)=0$, points $x$ and $y$ are called equivalent \cite{05}). Note that the equivalency of $x$ and $y$ is the same for various (non-degenerate) diffusion matrices $a(x)$. Taking into account that $V(x,O_i)=0$ for $x \in \Pi$, $i \in \{1,2,3\}$, we conclude that $V_{12}=V_{13}=\alpha_1$, $V_{21}=V_{23}=\alpha_2$, $V_{31}=V_{32}=\alpha_3$, so that our system has a log-asymptotic symmetry. The equalities $V_{12}=V_{13}$, $V_{21}=V_{23}$, $V_{31}=V_{32}$ (and log-asymptotic symmetry) are preserved for various matrices $a(x)$. Constants $\alpha_i$, $i\in\{1,2,3\}$ depend continuously on $a(x)$. 

In spite of the rough symmetry, the logarithmic asymptotics of the invariant measure $\mu^\epsilon$ of the process $X^\epsilon_t$ is defined by the numbers $V_{ij}=\alpha_i$: for each small enough neighborhood $\mathcal{E}_i$ of $O_i$, 
\begin{equation}\label{05}
\lim_{\epsilon \downarrow 0} \epsilon \ln \mu^\epsilon(\mathcal{E}_i)=-(\alpha_i-\alpha_1),\quad i \in \{1,2,3\}.
\end{equation}

This can be derived, for instance, from the fact that the exit time of the process $X^\epsilon_t$ from $G_i$, $i \in \{1,2,3\}$, is logarithmically equivalent to $\exp\{\frac{\alpha_i}{\epsilon}\}$ (see Section 5.5 in \cite{05}). The logarithmic asymptotics of $\mu^\epsilon(B)$ as $\epsilon \downarrow 0$ can be expressed through the function $V(x,y)$ for any Borel set $B \subset \RR^2$.

One can check that, starting from any point $x \in \RR^2$, for each $\delta >0$,
\[\lim_{\epsilon \downarrow 0} P_x\{|X^\epsilon_{T_\lambda(\epsilon)}-O_3|>\delta\}=0\]
if $\lim_{\epsilon \downarrow 0} \epsilon \ln T_\lambda(\epsilon)=\lambda>\alpha_2$. This means that $O_3$ is the metastable state for any $x \in \RR^2$ and time scale $T_\lambda(\epsilon)$ with $\lambda>\alpha_2$. If $0 <\lambda <\alpha_1$, the metastable state is $O_i$ if the initial point $x \in G_i$, $i \in \{1,2,3\}$. But if $0<\lambda<\alpha_1$ and the initial point $x \in G_4$, one can first say that $X^\epsilon_{T_\lambda(\epsilon)}$ is situated in the union of neighborhoods of $O_1$, $O_2$, and $O_3$ as $\epsilon \downarrow 0$. If $\lambda \in (\alpha_1, \alpha_2)$ and $x \in G_1\cup G_4$, the limiting distribution of $X^\epsilon_{T_\lambda(\epsilon)}$ as $\epsilon \downarrow 0$ is concentrated near $O_2$ and $O_3$. The exact limiting distribution between these neighborhoods is not defined by the numbers $V_{ij}$. To calculate the exact limiting distribution of $X^\epsilon_{T_\lambda(\epsilon)}$ between stable equilibriums of the non-perturbed system, we consider vector fields  $b(x)$ which are close to Hamiltonian fields.

Consider a Hamiltonian system $\dot{\widetilde{X}}_t=\overline{\nabla}H(\widetilde{X}_t)$ in $\RR^2$. Let the zero level set of the Hamiltonian $H(x)$ coincides with the set $\Pi$ of separatrices in Figure 1: $\Pi=\{x \in \RR^2: H(x)=0\}$; $H(x)$ is assumed to be smooth, positive in $G_2 \cup G_4$, and negative in $G_1 \cup G_3$. The function $H(x)$ has saddle points at $O_4$ and $O_5$ and extremums at $O_1$, $O_2$, and $O_3$. It is assumed that $H(x)$ has no other critical points. The trajectories of this Hamiltonian system are shown in Figure $2^a$.

\begin{figure}[h]
\centering
\begin{picture}(500,185)
    \put(0,10){\includegraphics[scale=0.5]{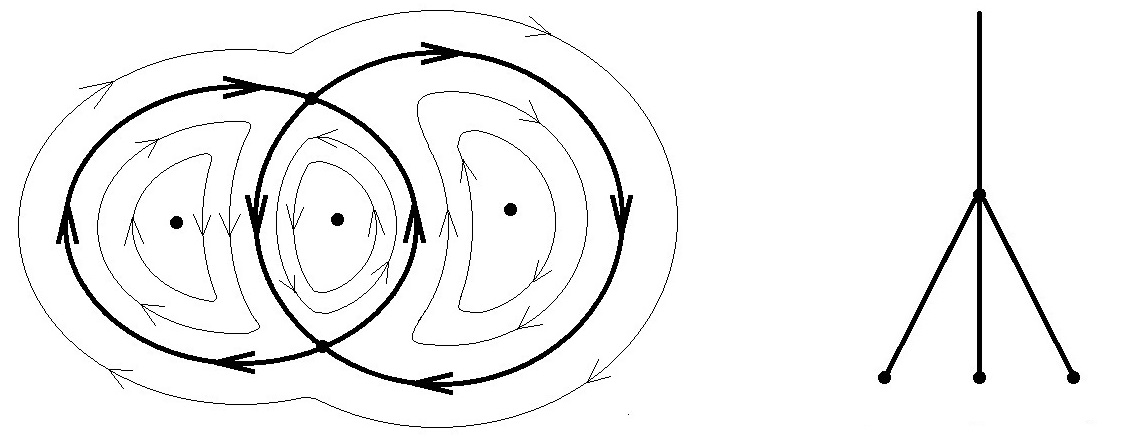}}
	\put(120,0){(a)}
	\put(360,0){(b)}	
	\put(63,97){$O_1$}	
	\put(124,97){$O_2$}	
	\put(190,100){$O_3$}	
	\put(110,145){$O_4$}	
	\put(115,29){$O_5$}
	\put(85,50){\Large $\bf G_1$}	
	\put(116,58){\Large $\bf G_2$}	
	\put(150,46){\Large $\bf G_3$}	
	\put(220,145){\Large $\bf G_4$}
			
	\put(330,57){\Large $\bf I_1$}			
	\put(355,57){\Large $\bf I_2$}			
	\put(395,57){\Large $\bf I_3$}			
	\put(373,130){\Large $\bf I_4$}				
	\put(374,98){\Large $\bf O$}			
	\put(330,17){\Large $\bf O_1$}			
	\put(365,17){\Large $\bf O_2$}			
	\put(402,17){\Large $\bf O_3$}
	\put(400,100){\Huge $\bf \Gamma$}
  \end{picture}
\caption*{Figure 2}
\end{figure}

Put $C_i(z)=\{x \in G_i : H(x)=z\}$; some of $C_i(z)$ are empty. The dynamical system $\widetilde{X}_t$ has a unique normalized invariant measure $\mu_{z,i}$ on each (non-empty) $C_i(z)$; the density $m_{z,i}(x)$ of $\mu_{z,i}$ on $C_i(z)$ is equal to 
\[
\left(\oint_{C_i(z)} \frac{dl}{|\nabla H(y)|} \cdot |\nabla H(x)|\right)^{-1}, \quad x \in C_i(z).
\] 

The extreme measures can be parametrized by points of the graph $\Gamma$ shown in Figure $2^b$. Each edge $I_i$, $i \in \{1,2,3,4\}$, corresponds to measures concentrated on periodic trajectories situated inside domain $G_i$. Vertices $O_1$, $O_2$, and $O_3$ correspond to stable equilibriums; one normalized invariant measure is concentrated at each equilibrium. The vertex $O$, actually, corresponds to two extreme measures: one concentrated at $O_4$ and another at $O_5$.

The pairs $(z,i)$, $z \in [\inf H(x), \sup H(x))$, $i \in \{1,2,3,4\}$, form a global coordinates system on $\Gamma$: a point $y \in \Gamma$ corresponding to the measure concentrated on $C_i(z)$ has coordinates $(z,i)$.

Consider now a deterministic perturbation of system $\widetilde{X}_t$:
\begin{equation}\label{06}
\dot{\widetilde{X}^\delta_t}=\overline{\nabla}H(\widetilde{X}^\delta_t)+\delta\beta(\widetilde{X}^\delta_t), \quad \widetilde{X}^\delta_0=x \in \RR^2.
\end{equation}
Here, $\beta(x)$ is a smooth bounded vector field vanishing on the set $\Pi=\{x \in \RR^2: H(x)=0\}$ such that $(\nabla H(x),\beta(x))>0$ for $x \in G_2$ and $(\nabla H(x),\beta(x))<0$ for $x \in G_1 \cup G_3 \cup G_4$. These assumptions imply that trajectories $\widetilde{X}^\delta_t$ behave as is shown in Figure 1:  $\widetilde{X}^\delta_t$ is attracted to $O_i$ if $\widetilde{X}^\delta_0=x \in G_i$, $i \in \{1,2,3\}$, and $\widetilde{X}^\delta_t$ is attracted to the curve $\Pi$ if $\widetilde{X}^\delta_0=x \in G_4$.

We consider in this Section stochastic perturbations of system \eqref{06}. The strength $\epsilon$ of stochastic perturbations is assumed to be much smaller than $\delta$ so that $\epsilon=\kappa\delta$, $0 <\kappa\ll 1$. The stochastically perturbed system has the form
\begin{equation}\label{07}
\dot{\widetilde{X}}^{\delta,\kappa}_t=\overline{\nabla}H(\widetilde{X}^{\delta,\kappa}_t)+\delta\beta(\widetilde{X}^{\delta,\kappa}_t)+\sqrt{\kappa\delta}\sigma(\widetilde{X}^{\delta,\kappa}_t)\dot{W}_t.
\end{equation}

We are interested in the transitions between asymptotically stable equilibriums of system \eqref{06}. The trajectories of \eqref{06} approach the equilibriums, roughly speaking, with the rate $\delta$. Therefore, it is natural to consider the time change $X^{\delta,\kappa}_t=\widetilde{X}^{\delta,\kappa}_{\frac{t}{\delta}}$. Then $X^{\delta,\kappa}_t$ satisfies the equation
\begin{equation}\label{08}
\dot{X}^{\delta,\kappa}_t=\frac{1}{\delta}\overline{\nabla}H(X^{\delta,\kappa}_t)+\beta(X^{\delta,\kappa}_t)+\sqrt{\kappa}\sigma(X^{\delta,\kappa}_t)\dot{W}_t.
\end{equation}

For each fixed $\kappa>0$ and $\delta \ll 1$, the motion described by \eqref{08} has two components: the fast one which is, roughly speaking, the rotation along periodic trajectories of the Hamiltonian system, and the slow component which is transversal to Hamiltonian trajectories. The fast component can be characterized by the invariant measures on those periodic trajectories. Each such measure is an extreme point of the cone $\mathcal{K}$ of invariant measures of system $\widetilde{X}_t$. The slow component is a motion on $\mathcal{K}$. 

Denote by $\bf Y$ the projection of $\RR^2$ on the graph $\Gamma$ which parametrizes the extreme points of $\mathcal{K}$: ${\bf Y}(x)$, $x \in \RR^2$, is the point of $\Gamma$ with coordinates $(z(x),i(x))$ such that $x \in C_i(z)$. The process $Y^{\delta,\kappa}_t={\bf Y}(X^{\delta,\kappa}_t)$ is the slow component of $X^{\delta,\kappa}_t$ (compare with Chapter 8 of \cite{05}). The process $Y^{\delta,\kappa}_t$, in general, is not Markovian.

Now, equation \eqref{08} has form \eqref{03} with $b(x)=b^\delta(x)=\frac{1}{\delta}\overline{\nabla}H(x)+\beta(x)$ and $\epsilon=\kappa$. The numbers $V_{ij}$ depend on $\delta$:
\begin{align*}
V_{ij}=V_{ij}^\delta=&\inf\left\{\frac{1}{2}\int_0^T\left(a^{-1}(\varphi_s)(\dot{\varphi}-\frac{1}{\delta}\overline{\nabla}H(\varphi_s)-\beta(\varphi_s)),(\dot{\varphi}-\frac{1}{\delta}\overline{\nabla}H(\varphi_s)-\beta(\varphi_s))\right)ds :\right.\\
& \varphi \in C_{0T}: \varphi_0=O_i, \varphi_T=O_j, T\ge 0 \bigg\}, \quad a(x)=\sigma(x)\sigma^*(x).
\end{align*}
Again we have $V^\delta_{12}=V^\delta_{13}=\alpha^\delta_1$, $V^\delta_{21}=V^\delta_{23}=\alpha^\delta_2$, $V^\delta_{31}=V^\delta_{32}=\alpha^\delta_3$. So that the system has rough symmetry.

Put 
\begin{align}\label{09}
&\hat{a}_i(z)=\int_{G_i(z)} \text{div}(a(x)\nabla H(x))dx, \quad T_i(z)=\oint_{C_i(z)}\frac{dl}{|\nabla H(x)|},\\
&\notag \hat{\beta}_i(z)=\int_{G_i(z)} \text{div}\beta(x)dx, \quad a_i(z)=\frac{\hat{a}_i(z)}{T_i(z)}, \quad \beta_i(z)=\frac{\hat{\beta}_i(z)}{T_i(z)}.
\end{align}
Here, $G_i(z)$ is the domain in $\RR^2$ bounded by $C_i(z)$. Define differential operators $L^\kappa_i$, $i \in \{1,2,3,4\}$:
\[
L^\kappa_i u(z)=\frac{\kappa}{2}a_i(z)\frac{d^2u}{dz^2}+\beta_i(z)\frac{du(z)}{dz}.
\]
Let 
\[a_i=a_i(O)=\int_{G_i}\text{div}(a(x)\nabla H(x))dx.\]

Processes $Y^{\delta,\kappa}_t$ on the graph $\Gamma$ converge weakly in the space of continuous functions $\varphi: [0,T]\mapsto \Gamma$ as $\delta \downarrow 0$ to a diffusion process $Y^\kappa_t$ on $\Gamma$. The process $Y^\kappa_t$ can be described by its generator $A^\kappa$:
\[A^\kappa f(z,i)=L^\kappa_i f(z,i)+\kappa h_i(z)\frac{df(z,i)}{dz}\]
for $z \in I_i$; the additional term $\kappa h_i(z)\frac{d}{dz}$ is a result of averaging of It\^{o}'s correction term in expression for $H(X^{\delta,\kappa}_t)$. Since we are interested in the limit $\kappa \downarrow 0$, this term will not influence the result, and we will not write the exact expression for $h_i(z)$. The domain of definition $D_{A^\kappa}$ of $A^\kappa$ consists of continuous on $\Gamma$ and smooth enough inside the edges functions $f(z,i)$ on $\Gamma$ such that the function $L^\kappa_if(z,i)$ is continuous on $\Gamma$ and satisfies the following gluing condition:
\[
\left.\sum_{i=1}^4 (-1)^i a_i(O)D_if(z,i)\right|_{(z,i)=O}=0.
\]
Here, $D_i$ is the operator of differentiation along the edge $I_i$. These conditions define the process $Y^\kappa_t$ in a unique way (Ch.8 in \cite{05}).

To prove the convergence of $Y^{\delta,\kappa}_t$, $0 \le t \le T<\infty$, to the process $Y^\kappa_t$, first, one should check the weak compactness of the family $Y^{\delta,\kappa}_t$, then prove the convergence inside the edges. This is a standard averaging principle (see, for instance, \cite{05}). To calculate the gluing conditions, consider, first, the case $\beta(x)\equiv 0$. Then the convergence can be proved similarly to Theorem 8.5.1 from \cite{05}. Then one should use the fact that the addition of the drift $\beta(x)$ leads to absolutely continuous transformation of corresponding measures in the space of trajectories (the Girsanov formula). The last step is similar to the argument in \cite{01}. 

The process $Y^\kappa_t$ is a diffusion process on a one-dimensional structure, and many of its characteristics can be calculated explicitly as solutions of corresponding linear ordinary differential equations satisfying the gluing condition and appropriate boundary conditions. In particular, one can calculate explicitly expected values of the exit times from a neighborhood of the vertex $O$ and the distribution of $Y^\kappa_t$ at the first exit time. This allows to calculate the limit $Y_t$ of the processes $Y^\kappa_t$ as $\kappa \downarrow 0$. The process $Y_t$ is deterministic inside the edges and has certain stochasticity at $O$: after reaching the vertex $O$, the trajectories of $Y_t$ without any delay go to the edges $I_1$, $I_2$, or $I_3$ with certain probabilities which are calculated explicitly.

Thus the processes $Y^{\delta,\kappa}_t$, $0 \le t \le T$, converge weakly in the space of continuous functions $[0,T] \mapsto \Gamma$ to the process $Y_t$ on $\Gamma$ as, first, $\delta \downarrow 0$ and then $\kappa \downarrow 0$. This leads to the following result:

\begin{thm}\label{thm01}
Let $X^{\delta,\kappa}_t$ be the process defined by equation \eqref{06} and
\[V_{ij}^\delta=\inf\left\{ S_{0T}^\delta(\varphi): \varphi \in C_{0T} : \varphi_0=O_i, \varphi_T=O_j, T \ge 0 \right\},\]
where $\frac{1}{\kappa}S^\delta_{0T}(\varphi)$ is the action functional for the family $X^{\delta,\kappa}_t$ as $\kappa \downarrow 0$ and $\delta>0$ is fixed. Then  $\lim_{\delta \downarrow 0} V^\delta_{ij}=:\alpha_i$ exists and
\[\alpha_i=\lim_{\delta \downarrow 0}V_{ij}^\delta =\left|\int_{H(O_i)}^O \frac{\hat{\beta}_i(z)}{\hat{\alpha}_i(z)}dz\right|, \quad i \in \{1,2,3\}. \]

Let $i, i_1, i_2 \in \{1,2,3\}$ and no two of them are equal. For a small $\rho>0$, put $\tau^{\delta,\kappa}_i=\min\{t : \min_{k=1,2}|X^{\delta,\kappa}_t-O_{i_k}|=\rho\}$. Then for any $x \in G_i$,
\[\lim_{\kappa \downarrow 0}\lim_{\delta \downarrow 0} P_x\left\{X^{\delta,\kappa}_{\tau^{\delta,\kappa}_i} \in G_{i_k}\right\}= \frac{\sqrt{\hat{a}_{i_k} \gamma_{i_k}}}{\sqrt{\hat{a}_{i_1} \gamma_{i_1}}+\sqrt{\hat{a}_{i_2} \gamma_{i_2}}},\quad k=1,2,\] 
where $\hat{a}_i$ were defined in \eqref{09}, and 
\[\left.\gamma_i=\frac{d}{dz}\int_{G_i(z)}\text{div}\beta(x)dx\right|_{z=O}=\oint_{C_i}\frac{\text{div}\beta(x)dl}{|\nabla H(x)|};\]
$C_i$ is the boundary of $G_i$.

Put $\overline{\tau}^{\delta,\kappa}=\min\{t: \min_{k=1,2,3}|X^{\delta,\kappa}_t-O_k|=\rho \}$. Then for any $x=X^{\delta,\kappa}_0\in G_4$,
\[\lim_{\kappa \downarrow 0}\lim_{\delta \downarrow 0} P_x\{ |X^{\delta,\kappa}_{\overline{\tau}^{\delta,\kappa}}-O_i|=\rho \}=\frac{\sqrt{\hat{a}_i \gamma_i}}{\sqrt{\hat{a}_1 \gamma_1}+\sqrt{\hat{a}_2 \gamma_2}+\sqrt{\hat{a}_3 \gamma_3}},\quad i=1,2,3.\]

\end{thm}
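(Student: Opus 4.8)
The plan is to reduce all three assertions to the one–dimensional diffusion $Y^\kappa_t$ on the graph $\Gamma$, whose generator $A^\kappa$ and gluing condition are recorded above, and then to let $\kappa\downarrow0$. Since $Y^{\delta,\kappa}_t={\bf Y}(X^{\delta,\kappa}_t)$ converges weakly to $Y^\kappa_t$ as $\delta\downarrow0$, every event in the statement becomes a boundary event for $Y^\kappa_t$: the condition $|X^{\delta,\kappa}_t-O_{i_k}|=\rho$ translates into $Y^\kappa_t$ reaching the point of the edge $I_{i_k}$ whose $H$–coordinate corresponds to the circle of radius $\rho$ about $O_{i_k}$. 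Thus each probability in the statement is, after $\delta\downarrow0$, a hitting (splitting) probability for $Y^\kappa_t$, which I will obtain as a solution of $A^\kappa p=0$ subject to the gluing condition and the appropriate absorbing conditions, and then evaluate in the limit $\kappa\downarrow0$.

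For the value $\alpha_i$, the slow motion along the edge $I_i$ is, after $\delta\downarrow0$, the diffusion with generator $L^\kappa_i=\tfrac\kappa2 a_i(z)\tfrac{d^2}{dz^2}+\beta_i(z)\tfrac{d}{dz}$, and $\lim_{\delta\downarrow0}V^\delta_{ij}$ is the quasi–potential of this one–dimensional process from $z=H(O_i)$ to the vertex value $z=0$; no further cost is incurred on $\Pi$, where all points are equivalent and $V(\cdot,O_j)=0$. The quasi–potential solves the one–dimensional eikonal equation $\tfrac12 a_i(z)(V')^2+\beta_i(z)V'=0$, whose nontrivial branch is proportional to $\hat\beta_i(z)/\hat a_i(z)$ — the common factor $T_i(z)$ cancels — so integration from $H(O_i)$ to $0$ produces the stated $\alpha_i$. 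The existence of $\lim_{\delta\downarrow0}V^\delta_{ij}$ follows from the continuous dependence of the constants $\alpha^\delta_i$ on $\delta$ through the coefficients.

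Because rough symmetry forces the exponents to coincide ($V^\delta_{ii_1}=V^\delta_{ii_2}$), the splitting probabilities are decided by pre–exponential factors and must be extracted from $Y^\kappa_t$ near the vertex $O$. On each edge $A^\kappa p=0$ integrates to $p'(z)\propto\exp\!\big(-\!\int 2\hat\beta_j(z)/(\kappa\hat a_j(z))\,dz\big)$ (again the $T_j$ cancel), so $p$ is affine in the scale function. Near $O$ one has $\hat\beta_j(z)\approx\gamma_j z$ and $\hat a_j(z)\approx\hat a_j$, and in the diffusive scaling $z=\sqrt\kappa\,v$ the scale density on the edges $I_1,I_2,I_3$ (whose drift is restoring toward $O_j$) becomes the Gaussian $\exp(-\gamma_j v^2/\hat a_j)$, with finite total mass $S_j(\infty)=\tfrac12\sqrt{\pi\hat a_j/\gamma_j}$; on $I_4$, whose drift points into $O$, the scale function diverges and the corresponding coefficient must vanish, so $I_4$ carries no flux. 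Imposing continuity at $O$ together with the gluing condition, which in outward–normal form reads $\sum_j\hat a_j\,\partial_v p\,|_O=0$, and solving the resulting linear system, the exit weight of edge $j$ is proportional to $\hat a_j/S_j(\infty)=\tfrac{2}{\sqrt\pi}\sqrt{\hat a_j\gamma_j}$. For $x\in G_4$ all of $I_1,I_2,I_3$ are absorbing and the third formula follows at once; for $x\in G_i$ the excursions that go down $I_i$ (back to $O_i$) or up $I_4$ do not stop the clock $\tau^{\delta,\kappa}_i$, so by the strong Markov property one conditions on committing to $I_{i_1}$ or $I_{i_2}$, replacing the denominator by $\sqrt{\hat a_{i_1}\gamma_{i_1}}+\sqrt{\hat a_{i_2}\gamma_{i_2}}$ and giving the second formula.

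The principal obstacle is exactly this vertex analysis. At $O$ the drift $\hat\beta_i(z)$ vanishes while the period $T_i(z)$ diverges logarithmically, so the large–deviation exponents are equal and the whole answer lives in the $O(\sqrt\kappa)$ diffusive layer around $O$; the square root emerges only through the Gaussian scale integral $\hat a_j/S_j(\infty)\propto\sqrt{\hat a_j\gamma_j}$. Making this rigorous requires (i) showing that the hitting probabilities of $Y^{\delta,\kappa}_t$ converge, after $\delta\downarrow0$, to those of $Y^\kappa_t$ uniformly enough to let $\kappa\downarrow0$; (ii) controlling the boundary layer in which the approximations $\hat\beta_j\approx\gamma_j z$, $\hat a_j(z)\approx\hat a_j$ are used, including the cancellation of $T_j$ in the harmonic equation but not in the time scales; and (iii) justifying the renewal argument for repeated excursions from $O$ in the second assertion. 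Everything else is a routine solution of linear ODEs with gluing and boundary conditions.
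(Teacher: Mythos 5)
Your proposal is correct and follows essentially the same route as the paper: for fixed $\kappa$ let $\delta\downarrow 0$ and pass, via the averaging principle and the Freidlin--Wentzell gluing condition at the vertex $O$, to the graph diffusion $Y^\kappa_t$ with generator $L^\kappa_i$, then obtain the hitting probabilities as explicit solutions of the gluing/boundary-value ODEs and evaluate them as $\kappa\downarrow 0$. Your scale-function and Gaussian-layer computation (giving the weights $\hat a_j/S_j(\infty)\propto\sqrt{\hat a_j\gamma_j}$, the vanishing flux on $I_4$, and the renewal argument for starting points in $G_i$) is precisely the ``explicit calculation'' the paper asserts but does not write out.
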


This result can be interpreted as follows. Let $\alpha_1 <\alpha_2<\alpha_3$ and $0 < \delta \ll \kappa \ll 1$. Let $\lim_{\kappa \downarrow 0}\kappa \ln T_\lambda(\kappa)=\lambda$. Then, if $\lambda>\alpha_2$, the metastable state for the time scale $T_\lambda(\kappa)$ and any initial point $X^{\delta,\kappa}_0=x  \in \RR^2$ will be $O_3$: with probability close to one, $X^{\delta,\kappa}_{T_\lambda(\kappa)}$ will be situated in a small neighborhood of $O_3$, and most of the time during the time interval $[0,T_\lambda(\kappa)]$, the trajectory $X^{\delta,\kappa}_t$ spends in this small neighborhood of $O_3$.

If $0 <\lambda<\alpha_1$, then the metastable state for an initial point $X^{\delta,\kappa}_0=x \in G_i$, $i \in \{1,2,3\}$, will be $O_i$. If $x \in G_4$, then we have metastable distribution: $X^{\delta,\kappa}_{T_\lambda(\kappa)}$ is situated in a small neighborhood of $O_i$, $i \in \{1,2,3\}$, with probability $\frac{\sqrt{\hat{a}_i \gamma_i}}{\sqrt{\hat{a}_1 \gamma_1}+\sqrt{\hat{a}_2 \gamma_2}+\sqrt{\hat{a}_3 \gamma_3}}$.

If $\alpha_1 <\lambda<\alpha_2$ and $X^{\delta,\kappa}_0=x \in G_1 \cup G_4$, then as, first, $\delta \downarrow 0$ and then $\kappa \downarrow 0$, $X^{\delta,\kappa}_{T_\lambda(\kappa)}$ is distributed between small neighborhoods of $O_2$ and $O_3$ with probabilities 
$\frac{\sqrt{\hat{a}_2 \gamma_2}}{\sqrt{\hat{a}_2 \gamma_2}+\sqrt{\hat{a}_3 \gamma_3}}$ and 
$\frac{\sqrt{\hat{a}_3 \gamma_3}}{\sqrt{\hat{a}_2 \gamma_2}+\sqrt{\hat{a}_3 \gamma_3}}$ respectively.

If $\alpha_1 <\lambda<\alpha_2$ and $x \in G_i$, $i=2,3$, then the metastable state is $O_i$.

\section{Hierarchy of Markov chains}

Consider system \eqref{03} with the vector field $b(x)$, $x \in \RR^n$, having a finite number of asymptotically stable equilibriums $K_1,...,K_l$ separated by separatrix surfaces of dimension $n-1$. Let $\Pi$ be the union of separatrix surfaces. The case of more general asymptotically stable compacts can be considered in a similar way if each of these compacts supports just one normalized invariant measure. Assume for brevity that each point $x \in \RR^n\backslash \Pi$ is attracted to one of $K_i$.

In Section 1, we introduced the action functional $\epsilon^{-1}S_{0T}(\varphi)$ for the family of processes $X^\epsilon_t$, $0\le t \le T$, defined by \eqref{03}, the function $V(x,y)$, $x,y \in \RR^n$, and numbers $V_{ij}=V(K_i,K_j)$, $i,j \in \mathcal{L}=\{1,2,...,l\}$.

For each $i \in \mathcal{L}$, consider $\alpha_i=\min_{j \in \mathcal{L}\backslash \{i\}}V_{ij}$. This minimum can be achieved at $d(i)$ points $j_1^*(i),...,j_{d(i)}^*(i) \in \mathcal{L}\backslash \{i\},$ $1 \le d(i) \le l-1$. Connect the point $i$ with each of $j_1^*(i),...,j_{d(i)}^*(i)$ by an arrow leading from $i$ to $j_k^*(i)$,  $k \in \{1,...,d(i)\}$. So now from each point $i \in \mathcal{L}$, one or several arrows start. Denote by $\mathcal{E}(i)$ the maximal subset of $\mathcal{L}$ such that: (1) $i \in \mathcal{E}(i)$, (2) for any $k \in \mathcal{E}(i)$, $k \neq i$, a sequence of arrows exists leading from $i$ to $k$ as well as a sequence leading from $k$ to $i$. Note that if $\widetilde{\mathcal{E}}(i)$ and $\widetilde{\widetilde{\mathcal{E}}}(i)$ are subsets of $\mathcal{L}$ satisfying conditions (1) and (2) then  $\widetilde{\mathcal{E}}(i) \cup \widetilde{\widetilde{\mathcal{E}}}(i)$ also satisfies these conditions; for $k \in  \mathcal{E}(i)$, $\mathcal{E}(k)=\mathcal{E}(i)$. So that the maximal set $\mathcal{E}(i)$ satisfying conditions (1) and (2) exists, and sets $\mathcal{E}(i)$ form a partition of $\mathcal{L}: \mathcal{L}=\mathcal{E}_1\cup \mathcal{E}_2 \cup...\cup \mathcal{E}_r$, $\mathcal{E}_j \cap \mathcal{E}_k=\emptyset$ if $j \neq k$. A set $\mathcal{E}_i$ can consist of one point or of any number less than or equal to $l$; $1 \le r \le l-1$.

On every set $\mathcal{E}_k$, $k \in \{1,...,r\}$, define a Markov chain: put $p_{ij}=p_{ij}^{k,\epsilon}=\exp\{-\alpha_i \epsilon^{-1}\}$ if there is an arrow leading from $i \in \mathcal{E}_k$ to $j \in \mathcal{E}_k$; if there is no such an arrow and $i \neq j$, put $p_{ij}=0$. Define the diagonal elements of the transition matrix by equalities $p_{ii}=1-d(i)\exp\{-\alpha_i \epsilon^{-1}\}$, where $d(i)$ is the number of arrows starting at $i \in \mathcal{E}_k$. This Markov chain on the phase space $\mathcal{E}_k$ will be also called $\mathcal{E}_k$.

It is easy to derive from the definition of $\mathcal{E}_k$ that a number $N$ exists such that the trajectory of the chain $\mathcal{E}_k$ can go from any state $i \in \mathcal{E}_k$ to any $j \in \mathcal{E}_k$ in $N=N(\mathcal{E}_k)$ steps with a positive probability. This implies that the chain $\mathcal{E}_k$ has a unique stationary distribution ${\bf q}={\bf q}^\epsilon(\mathcal{E}_k)=(q^{k,\epsilon}_1,...,q^{k,\epsilon}_{n(\mathcal{E}_k)})$, where $n(\mathcal{E}_k)$ is the number of states in $\mathcal{E}_k$.

For each $\mathcal{E}_k$, define the numbers 
\begin{align}\label{10}
&r(\mathcal{E}_k)=\max_{i \in \mathcal{E}_k} \min_{j \in \mathcal{L}\backslash \{i\}} V_{ij}=\max_{i \in\mathcal{E}_k} \alpha_i,\\
&\notag m_i(\mathcal{E}_k)=\alpha_i-r(\mathcal{E}_k) \text{ for } i \in \mathcal{E}_k,\\
&\notag e(\mathcal{E}_k)=\min_{i \in \mathcal{E}_k, j \notin \mathcal{E}_k}(r(\mathcal{E}_k)+V_{ij}-\alpha_i).
\end{align}

Let $(i_1^*, j_1^*),...,(i_{a(k)}^*,j_{a(k)}^*)$ be the points of $\mathcal{E}_k \times \{\mathcal{L} \backslash \mathcal{E}_k \}$ where the last minimum is achieved; $\{i_1^*,...,i_{a(k)}^* \}=I(\mathcal{E}_k) \subseteq \mathcal{E}_k$, $\{j_1^*,...,j_{a(k)}^* \}=J(\mathcal{E}_k) \subseteq \mathcal{L} \backslash \mathcal{E}_k$.

\begin{lem}\label{lem01}\
\begin{enumerate}[(1)]
\item Let ${\bf q}^\epsilon(\mathcal{E}_k)=(q_1^{k,\epsilon},...,q_{n(\mathcal{E}_k)}^{k,\epsilon})$ be the stationary distribution of the chain $\mathcal{E}_k$. Then 
\[\lim_{\epsilon \downarrow 0} \epsilon \ln q_i^{k,\epsilon}=\alpha_i-r(\mathcal{E}_k), \quad 1 \le i \le n(\mathcal{E}_k), \quad 1 \le k \le r.\]
\item Let $\left(p_{ij}^{(n),k,\epsilon}\right)$ be the transition matrix of the chain $\mathcal{E}_k$ in $n$ steps: $\left(p_{ij}^{(n),k,\epsilon}\right)=\left(p_{ij}^{k,\epsilon}\right)^n.$ Let a function $T_\lambda(\epsilon)$ satisfy the equality $\lim_{\epsilon \downarrow 0} \epsilon \ln T_\lambda(\epsilon)=\lambda>0$ and $[T_\lambda(\epsilon)]$ be the integer part of $T_\lambda(\epsilon)$. Then
\[\lim_{\epsilon \downarrow 0} \left|p_{ij}^{([T_\lambda(\epsilon)]),k,\epsilon}-q_j^{k,\epsilon}\right|=0\]
for $i,j \in \{1,...,n(\mathcal{E}_k)\}$, $1 \le k \le r$, if $\lambda > r(\mathcal{E}_k)$.
\item Let $Z^\epsilon_n$ be the Markov chain in the phase space $\mathcal{L}$ with transition probabilities $p_{ij}^\epsilon =\exp \{-\epsilon^{-1}V_{ij}\}$ if $i \neq j$ and $p_{ii}^\epsilon=1-\sum_{j:j\neq i} p_{ij}^\epsilon$. Let $\tau^\epsilon(\mathcal{E}_k)$ be the first time when $Z_n^\epsilon$ leaves $\mathcal{E}_k$: $\tau^\epsilon(\mathcal{E}_k)=\min\{n: Z_n^\epsilon \notin \mathcal{E}_k \}$. Then, for any $\delta >0$, 
\[\lim_{\epsilon \downarrow 0} P_i\left\{ \exp\left\{\frac{1}{\epsilon}(e(\mathcal{E}_k)-\delta)\right\} \le \tau^\epsilon(\mathcal{E}_k) \le \exp\left\{\frac{1}{\epsilon}(e(\mathcal{E}_k)+\delta)\right\} \right\} =1\]
for each initial point $Z_0^\epsilon=i \in \mathcal{E}_k$.
\item For each initial point $Z_0^\epsilon=i \in \mathcal{E}_k$, 
\[\lim_{\epsilon \downarrow 0}P_i\left\{Z^\epsilon_{\tau^\epsilon(\mathcal{E}_k)} \in J(\mathcal{E}_k)\right\}=1, \quad \lim_{\epsilon \downarrow 0}P_i\left\{Z^\epsilon_{\tau^\epsilon(\mathcal{E}_k)-1} \in I(\mathcal{E}_k)\right\}=1.\]
\end{enumerate}
\end{lem}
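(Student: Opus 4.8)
The four assertions rest on two classical tools from the Freidlin--Wentzell theory of Markov chains with exponentially small transition probabilities (Chapter 6 of \cite{05}): the Markov chain tree theorem, which represents the stationary distribution through sums over spanning in-trees ($W$-graphs), and the graph-theoretic asymptotics of exit times and exit distributions. The plan is to treat (1) directly by the tree theorem, to deduce (2) from a sharp spectral-gap (equivalently, a time-change) estimate, and to obtain (3)--(4) by feeding the quasi-stationary profile from (1)--(2) into a competing-exponentials analysis of the escape channels. Throughout I read $d(i)$ in the definition of $\mathcal{E}_k$ as the number of arrows issuing from $i$ that remain inside $\mathcal{E}_k$, so that the transition matrix of $\mathcal{E}_k$ is genuinely stochastic; arrows leaving $\mathcal{E}_k$ play no role in (1)--(2).

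For (1) I would invoke the tree theorem: $q_i^{k,\epsilon}=\bigl(\sum_{g\in G(i)}\pi(g)\bigr)\big/\bigl(\sum_{m\in\mathcal{E}_k}\sum_{g\in G(m)}\pi(g)\bigr)$, where $G(i)$ is the set of acyclic graphs on $\mathcal{E}_k$ in which every vertex $\ne i$ emits exactly one internal arrow, and $\pi(g)=\prod_{(m\to n)\in g}p_{mn}^{k,\epsilon}$. The decisive observation is that $p_{mn}^{k,\epsilon}=\exp\{-\alpha_m/\epsilon\}$ depends only on the source $m$, and in each $i$-graph every non-root vertex emits exactly one arrow; hence every $g\in G(i)$ carries the same weight $\pi(g)=\exp\{-\epsilon^{-1}\sum_{m\in\mathcal{E}_k\setminus\{i\}}\alpha_m\}$. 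The internal strong connectivity of $\mathcal{E}_k$ (conditions (1)--(2) defining $\mathcal{E}(i)$, whose connecting arrow-paths stay inside $\mathcal{E}_k$) guarantees $G(i)\ne\varnothing$, so $\sum_{g\in G(i)}\pi(g)=|G(i)|\exp\{-\epsilon^{-1}\sum_{m\ne i}\alpha_m\}$ with $|G(i)|$ a positive $\epsilon$-independent integer. Writing $A=\sum_{m\in\mathcal{E}_k}\alpha_m$, the normalizing sum $\sum_m|G(m)|\exp\{-\epsilon^{-1}(A-\alpha_m)\}$ is dominated as $\epsilon\downarrow0$ by the term with the largest $\alpha_m=r(\mathcal{E}_k)$, and therefore $\epsilon\ln q_i^{k,\epsilon}\to -(A-\alpha_i)+(A-r(\mathcal{E}_k))=\alpha_i-r(\mathcal{E}_k)$, as claimed.

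For (2) the content is that the relaxation time of $\mathcal{E}_k$ has logarithmic order $r(\mathcal{E}_k)$: the slowest mode is the emptying of the deepest state $i^*$ with $\alpha_{i^*}=r(\mathcal{E}_k)$, whose mean holding time is of order $\exp\{r(\mathcal{E}_k)/\epsilon\}$. I would make this quantitative either through the embedded jump chain, whose transition probabilities $1/d(i)$ are $\epsilon$-independent and which mixes to a fixed distribution in $O(1)$ jumps, so that the state-occupation law after time $t$ is close to $q^{k,\epsilon}$ once the number of jumps out of every state is large, which occurs as soon as $t\gg\exp\{r(\mathcal{E}_k)/\epsilon\}$; or, more robustly, by bounding the second-largest eigenvalue modulus of the transition matrix by $1-c\exp\{-r(\mathcal{E}_k)/\epsilon\}$ (all eigenvalues lie near $1$ since $p_{ii}\to1$, and the gap has order $\exp\{-r(\mathcal{E}_k)/\epsilon\}$ by the eigenvalue asymptotics of \cite{05}, §6.7). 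Either way, since $\epsilon\ln T_\lambda(\epsilon)\to\lambda>r(\mathcal{E}_k)$ gives $T_\lambda(\epsilon)\exp\{-r(\mathcal{E}_k)/\epsilon\}\to\infty$, one gets $|p_{ij}^{([T_\lambda]),k,\epsilon}-q_j^{k,\epsilon}|\le C\bigl(1-c\,e^{-r/\epsilon}\bigr)^{T_\lambda(\epsilon)}\to0$. I expect this to be the main obstacle: obtaining exactly the threshold $r(\mathcal{E}_k)$ rather than a multiple of it requires the sharp gap, whereas a crude $N$-step Doeblin minorization would only yield the threshold $N\,r(\mathcal{E}_k)$.

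For (3) and (4) I would first note that the restriction of $Z^\epsilon_n$ to $\mathcal{E}_k$, run up to $\tau^\epsilon(\mathcal{E}_k)$, agrees to leading exponential order with the chain $\mathcal{E}_k$, since arrow transitions inside $\mathcal{E}_k$ carry $V_{ij}=\alpha_i$ while all other internal transitions carry $V_{ij}>\alpha_i$ and are negligible. By (2) the chain reaches its quasi-stationary profile $q^{k,\epsilon}$ on the scale $\exp\{r(\mathcal{E}_k)/\epsilon\}$, which is no larger than $\exp\{e(\mathcal{E}_k)/\epsilon\}$ because $e(\mathcal{E}_k)\ge r(\mathcal{E}_k)$ (as $V_{ij}\ge\alpha_i$); so the instantaneous rate of leaving $\mathcal{E}_k$ equilibrates to $\sum_{i\in\mathcal{E}_k}\sum_{j\notin\mathcal{E}_k}q_i^{k,\epsilon}e^{-V_{ij}/\epsilon}$, whose logarithmic order, by (1), is $-\min_{i\in\mathcal{E}_k,\,j\notin\mathcal{E}_k}\bigl(r(\mathcal{E}_k)+V_{ij}-\alpha_i\bigr)=-e(\mathcal{E}_k)$. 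Thus $\tau^\epsilon(\mathcal{E}_k)$ is, to logarithmic order, exponentially distributed with mean of order $\exp\{e(\mathcal{E}_k)/\epsilon\}$, which yields the concentration in (3); this is the standard exit-time asymptotics of \cite{05}, Ch.~6, now applied with the generalized cycle $\mathcal{E}_k$ in place of an ordinary cycle, and the borderline case $e(\mathcal{E}_k)=r(\mathcal{E}_k)$ (an escaping arrow) is handled by the graph formulas, which need no strict separation of scales. Finally, the exponent $r(\mathcal{E}_k)+V_{ij}-\alpha_i$ is minimized precisely on the pairs in $I(\mathcal{E}_k)\times J(\mathcal{E}_k)$, and every other escape channel is exponentially less likely, so the competition of exponentials forces $Z^\epsilon_{\tau^\epsilon(\mathcal{E}_k)-1}\in I(\mathcal{E}_k)$ and $Z^\epsilon_{\tau^\epsilon(\mathcal{E}_k)}\in J(\mathcal{E}_k)$ with probability tending to one, which is assertion (4).
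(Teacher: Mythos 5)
Your parts (1), (3) and (4) follow essentially the paper's own route. For (1), the paper cites the graph formula for stationary distributions (Lemma 6.3.1 of \cite{05}) and then makes exactly your observation: since $p^{k,\epsilon}_{mn}=\exp\{-\alpha_m/\epsilon\}$ depends only on the source and every non-root vertex of an $i$-graph emits exactly one arrow, every graph carries the common weight $\exp\{-\epsilon^{-1}\sum_{m\neq i}\alpha_m\}$, and the asymptotics $\alpha_i-r(\mathcal{E}_k)$ drop out. For (3)--(4), the paper, like you, first shows the chain does not leave $\mathcal{E}_k$ before time $\exp\{(r(\mathcal{E}_k)+\delta)/\epsilon\}$, uses part (2) to replace the initial state by the stationary distribution, and then delegates to the exit-time and exit-position machinery of \cite{05} (Theorem 6.6.2 and the Section 5 arguments); your competing-exponentials identification of $I(\mathcal{E}_k)\times J(\mathcal{E}_k)$ is the same computation.

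The genuine problem is part (2), and specifically your parenthetical claim that Doeblin-type minorization can only give a threshold $N\,r(\mathcal{E}_k)$, so that a sharp spectral gap is required. This is exactly backwards relative to the paper: its proof of (2) \emph{is} a Doeblin argument, applied not over $N$ single steps but over blocks of length $t_\epsilon=\bigl[\exp\{(r(\mathcal{E}_k)+\delta/4)/\epsilon\}\bigr]$, with the minorization measure concentrated at the single deepest state $\overline{j}$ (the one with $\alpha_{\overline{j}}=r(\mathcal{E}_k)$). The point you missed is that the minorization constant need not be of order one: it suffices that $\delta_\epsilon\ge\exp\{-\delta/(4\epsilon)\}$ is only sub-exponentially small on the relevant scale, because the number of blocks in $[0,T_\lambda(\epsilon)]$ is of order $\exp\{(\lambda-r(\mathcal{E}_k)-\delta/4)/\epsilon\}$, so that for $\delta<\lambda-r(\mathcal{E}_k)$ the product $\delta_\epsilon\cdot T_\lambda(\epsilon)/t_\epsilon$ still tends to infinity and $(1-\delta_\epsilon)^{T_\lambda(\epsilon)/t_\epsilon}$ is double-exponentially small. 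No spectral information is needed, and the threshold comes out exactly at $r(\mathcal{E}_k)$.

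Meanwhile, both substitutes you propose are defective as stated. The embedded jump chain, with transition probabilities $1/d(i)$, need not be aperiodic: for $\mathcal{E}_k=\{1,2\}$ with arrows $1\to2$ and $2\to1$ it is $2$-periodic and does not converge at all, so it certainly does not ``mix in $O(1)$ jumps''; repairing this forces you to exploit the randomness of the holding times, i.e.\ to rebuild a coupling argument of the very kind you set aside. The eigenvalue route is also not quotable off the shelf: these chains are not reversible, so the second-largest eigenvalue modulus bounds $\lvert p^{(n)}_{ij}-q_j\rvert$ only up to constants and Jordan-block polynomial factors that themselves depend on the $\epsilon$-dependent matrix, and converting logarithmic eigenvalue asymptotics into a mixing bound valid at the sharp threshold $\lambda>r(\mathcal{E}_k)$ requires an additional argument you do not supply. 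So part (2) as written is a gap --- fixable, but most economically fixed by the block-Doeblin argument that the paper actually uses and that you explicitly ruled out.
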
 

We will give a sketch of the proof of this lemma.

\begin{proof}[Sketch of proof]\
\begin{enumerate}[(1)]
\item A system of arrows $g$ connecting some points of a finite set $\mathcal{E}$ is called an $i$-graph over $\mathcal{E}$, $i \in \mathcal{E}$, if one arrow starts from each $j \in \mathcal{E}\backslash \{i\}$ and a sequence of arrows leading from $j \in \mathcal{E}\backslash \{i\}$ to $i$ exists for each $j \in \mathcal{E}\backslash \{i\}$. Denote $G_j(\mathcal{E}_k)$ the set of all $j$-graphs over $\mathcal{E}_k$ consisting of arrows introduced for the set $\mathcal{E}_k$. It follows from Lemma 6.3.1 of \cite{05} that 
\begin{equation}\label{11}
\lim_{\epsilon \downarrow 0} \epsilon \ln q^{k,\epsilon}_j=\min_{g \in G_j(\mathcal{E}_k)} \sum_{(m\rightarrow n)\in g}V_{mn}-\max_{m \in \mathcal{E}_k} \min_{n \in \mathcal{L}\backslash \{m\}} V_{mn}.
\end{equation}
Taking into account that $p_{ij}^{k,\epsilon}=\exp\{-\alpha_i\epsilon^{-1}\}$, $i \neq j$, and $r(\mathcal{E}_k)=\max_{i \in \mathcal{E}_k}\alpha_i$, it is easy to see that  $\sum_{(m\rightarrow n)\in g}V_{mn}= \sum_{i:i \neq j}\alpha_i$ for each $g \in G_j(\mathcal{E}_k)$. Thus \eqref{11} implies the first statement of Lemma \ref{lem01}.

\item Let $\delta=\lambda - r (\mathcal{E}_k)>0$ and $\overline{j} \in \mathcal{E}_k$ be such that $\alpha_{\overline{j}}=r(\mathcal{E}_k)$. It follows from the definition of the chain $\mathcal{E}_k$ that a trajectory starting from any $i \in \mathcal{E}_k$ reaches $\overline{j}$ before time $t_\epsilon=\left[\exp\left\{\frac{r(\mathcal{E}_k)+\frac{\delta}{4}}{\epsilon} \right\}\right]$ with probability greater than $\exp\left\{-\frac{\delta}{8\epsilon}\right\}$ as $\epsilon \downarrow 0$. This implies that starting from any $i \in \mathcal{E}_k$, the trajectory of $\mathcal{E}_k$ will be at $\overline{j}$ at time $t_\epsilon$ with probability $\delta_\epsilon \ge \exp\left\{-\frac{\delta}{4\epsilon}\right\}$ if $\epsilon>0$ is small enough. So that the modified D\"{o}blin condition is satisfied. Using a standard estimate (see \cite{02} Section 5.2):
\begin{align}\label{12}
\left|p_{ij}^{([T_\lambda(\epsilon)]),k,\epsilon}-q_j^{k,\epsilon}\right|&\le (1-\delta_\epsilon)^{\frac{[T_\lambda(\epsilon)]}{t_\epsilon}-1} \sim \exp\left\{-\delta_\epsilon\frac{[T_\lambda(\epsilon)]}{t_\epsilon}+\delta_\epsilon\right\}\\
&\notag\le\exp\left\{-\exp \left\{-\frac{\delta}{4\epsilon}+\frac{r(\mathcal{E}_k)+(\lambda-r(\mathcal{E}_k))-r(\mathcal{E}_k)}{\epsilon}-\frac{\delta}{4\epsilon} \right\}+\frac{\delta}{\epsilon} \right\}.
\end{align}
Choosing $\delta <\lambda - r (\mathcal{E}_k)$, we conclude from \eqref{12}, for $\epsilon>0$ small enough,
\begin{equation}\label{13}
\left|p_{ij}^{([T_\lambda(\epsilon)]),k,\epsilon}-q_j^{k,\epsilon}\right|\le \exp\left\{-\exp\left\{ \frac{\delta}{4\epsilon}\right\} \right\}.
\end{equation}

\item First, taking into account that $\max_{i \in \mathcal{E}_k, m \notin \mathcal{E}_k}(V_{im}-\alpha_i)=h>0$, one can show that for $\delta>0$ small enough, trajectory $Z^\epsilon_n$ of the chain $\mathcal{E}_k$ starting from $i \in \mathcal{E}_k$ will not leave $\mathcal{E}_k$ before time $\exp\left\{\frac{1}{\epsilon}(r(\mathcal{E}_k)+\delta)\right\}$ with probability close to 1 if $\epsilon \downarrow 0$. On the other hand, according to part (2) of this lemma, distribution of $Z^\epsilon_n$ at time $n=\left[\exp\left\{\frac{1}{\epsilon}(r(\mathcal{E}_k)+\delta)\right\}\right]$ will be close to the stationary distribution described in part (1). So that the  distribution of $\tau^\epsilon(\mathcal{E}_k)$ starting from any $i \in \mathcal{E}_k$ is close to the distribution of $\tau^\epsilon(\mathcal{E}_k)$ for trajectories with the stationary initial distribution as $\epsilon \ll 1$. This allows to prove statement (3) similarly to the proof of Theorem 6.6.2 from \cite{05}.
\end{enumerate}

The last statement of the lemma can be proved using result of part (3) and arguments from Section 5 of \cite{05}.

\end{proof}

The number $r(\mathcal{E}_k)$ defined in \eqref{10}, we call the convergence rate for the chain $\mathcal{E}_k$; numbers $m_i(\mathcal{E}_k)$ are called invariant measure rates; numbers $e(\mathcal{E}_k)$ are called exit rates. The set $I(\mathcal{E}_k)$ is called exit set from $\mathcal{E}_k$, $J(\mathcal{E}_k)$ is the set of positions after exiting $\mathcal{E}_k$. The set $\{i \in \mathcal{E}_k : \alpha_i=r(\mathcal{E}_k)\}$ is called main subset of $\mathcal{E}_k$. Lemma \ref{lem01} gives motivations for these names.\\

We call the Markov chains $\mathcal{E}_1,...,\mathcal{E}_r$ chains of rank 1. A specific property of these chains consists of the fact that their transition probabilities $p_{ij}^\epsilon$ in $i$-th row are equal either to $\exp\{-\alpha_i\epsilon^{-1}\}>0$ or to 0. We will define now chains of rank 2 and higher. If necessary, we will provide the notations related to a chain of rank $m$ with an index $m$; for instance we will write $\mathcal{E}_k^{(1)}$ and $\alpha_k^{(1)}$ for $\mathcal{E}_k$ and $\alpha_k $.

We say that a chain $\mathcal{E}_j^{(1)}$ of rank 1 (1-chain) follows after 1-chain $\mathcal{E}_i^{(1)}$ if $\mathcal{E}_j^{(1)}$ contains, at least, one element of $J(\mathcal{E}_i^{(1)})$: $J(\mathcal{E}_i^{(1)}) \cup \mathcal{E}_j^{(1)} \neq \emptyset$. Introduce a system of arrows in the set $\mathcal{E}^{(1)}=(\mathcal{E}_1^{(1)},...,\mathcal{E}_{r^{(1)}}^{(1)})$. We draw an arrow from $\mathcal{E}_i^{(1)}$ to $\mathcal{E}_j^{(1)}$ if $\mathcal{E}_j^{(1)}$ follows after $\mathcal{E}_i^{(1)}$. So, from each $\mathcal{E}_k^{(1)}$ starts at least one arrow.

For each $\mathcal{E}_i^{(1)} \in \mathcal{E}^{(1)}$, consider a maximal subset $\mathcal{E}^{(2)}(\mathcal{E}_i^{(1)})$ of $\mathcal{E}^{(1)}$ containing $\mathcal{E}_i^{(1)}$  and such that any two points of $\mathcal{E}^{(2)}(\mathcal{E}_i^{(1)})$ are connected by a sequence of arrows leading from one point to another. In this way, we get a partition of $\mathcal{E}^{(1)}$: $\mathcal{E}^{(1)}=\cup_{k=1}^{r^{(2)}} \mathcal{E}_k^{(2)}$, $\mathcal{E}_i^{(2)} \cap \mathcal{E}_j^{(2)}=\emptyset$ if $i \neq j$, $i,j \in \{1,...,r^{(2)} \}$, where $r^{(2)}$ is the number of elements in the partition. 

Define now a Markov chain on each $\mathcal{E}^{(2)}_k$: let $p_{ij}^{(2)}=p^{(2),k}_{\mathcal{E}_i^{(1)},\mathcal{E}_j^{(1)}}=\exp\{-\epsilon^{-1} e(\mathcal{E}_i^{(1)}) \}$ if $\mathcal{E}_i^{(1)},\mathcal{E}_j^{(1)} \in \mathcal{E}_k^{(2)}$, $i \neq j$, and an arrow leading from $\mathcal{E}_i^{(1)}$ to $\mathcal{E}_j^{(1)}$ exists; if there is no such an arrow, put $p_{ij}^{(2)}=0$. The probabilities $p_{ii}^{(2)}$ define so that sum of elements of each row of $\left(p_{ij}^{(2)}\right)$ is equal to 1. This chain on the state space $\mathcal{E}_k^{(2)}$ is also called $\mathcal{E}_k^{(2)}$. The chains $\mathcal{E}_1^{(2)},...,\mathcal{E}_{r^{(2)}}^{(2)}$ are called rank 2 chains (2-chains). For each 2-chain $\mathcal{E}_k^{(2)}$ define the convergence rate $r(\mathcal{E}_{k}^{(2)})$, the invariant distribution rates $m_{\mathcal{E}_{i}^{(1)}}(\mathcal{E}_{k}^{(2)})=m_i(\mathcal{E}_{k}^{(2)})$ for $\mathcal{E}_{i}^{(1)} \in \mathcal{E}_{k}^{(2)}$, and the exit rate $e(\mathcal{E}_{k}^{(2)})$ as such numbers were defined for 1-chains in \eqref{10}.

In the similar way as we did it  for 1-chains, the sets $I( \mathcal{E}_k^{(2)})\subseteq \mathcal{E}_k^{(2)} $ and $J( \mathcal{E}_k^{(2)})\not\subseteq \mathcal{E}_k^{(2)} $ are defined. 

Now we can define 3-chains, 4-chains, and so on by induction. Note that the number $n_k$ of $k$-chains satisfies the condition $n_k \le n_{k-1}-1$.  So that rank cannot be larger than $l-1$ where $l$ is the number of asymptotically stable attractors. The chain of the highest rank covers all $l$ points. See an example in the next Section. The hierarchy of Markov chains is a generalization of the hierarchy of cycles introduced in \cite{03} (see also \cite{04,05}).

\section{Metastable sets}

As was shown in \cite{03,04} (see also \cite{05}), in the generic case (when there is no rough symmetry), the long-time behavior of the process $X^\epsilon_t$ defined by equation \eqref{03} can be described by a hierarchy of cycles. It is convenient to consider the attractors $K_1$,...,$K_l$ (Sometimes, we denote $K_i$ just by $i$.) as cycles of rank 0. The states of a $(k+1)$-cycle, $k>0$, are some cycles of rank $k$ ($k$-cycles). One arrow starts at each $k$-cycle $\mathcal{C}$ included in a $(k+1)$-cycle $\mathcal{C}^{(k+1)}$ and leads to another $k$-cycle $\widetilde{\mathcal{C}} \in \mathcal{C}^{(k+1)}$. Starting at $\mathcal{C} \in \mathcal{C}^{(k+1)}$ and moving along the arrows one comes back to $\mathcal{C}$.

Let the initial point $X^\epsilon_0=x \in \RR^n$ be attracted to $K_i$. In the generic case for each $i \in \mathcal{L}$, one can consider a sequence of cycles of growing rank: $\{K_i\}=\mathcal{C}^{(0)}(i) \subseteq \mathcal{C}^{(1)}(i) \subseteq ... \subseteq \mathcal{C}^{(m)}(i)$ where $m=m(i)<l$.

The convergence (rotation) rate $r(\mathcal{C}^{(k)}(i) )$, the invariant measure rates $m_j(\mathcal{C}^{(k)}(i) )$, $j \in \mathcal{C}^{(k)}(i)$, and the exit rate $e(\mathcal{C}^{(k)}(i) )$ are introduced for each cycle $\mathcal{C}^{(k)}(i) $ as well as the main state $M(\mathcal{C}^{(k)}(i) )$. All these characteristics are defined by the numbers $V_{ij}$. The exit rates $e(\mathcal{C}^{(0)}(i) ),...,e(\mathcal{C}^{(m)}(i) )$ form a non-decreasing sequence, $e(\mathcal{C}^{(m)}(i) )=\infty$, $r(\mathcal{C}^{(0)}(i) )=0$. It was shown in mentioned above papers there that if $\lim_{\epsilon \downarrow 0} \epsilon \ln T_\lambda(\epsilon)=\lambda>0$, $e(\mathcal{C}^{(k-1)}(i) )<\lambda<e(\mathcal{C}^{(k)}(i) )$, and $r(\mathcal{C}^{(k)}(i))<\lambda$, then the trajectory $X^\epsilon_t$, $X^\epsilon_0=x$, during time interval $[0,T_\lambda(\epsilon)]$  spends most of the time near $M(\mathcal{C}^{(k)}(i) )$ and $X^\epsilon_{T_\lambda(\epsilon)}$ is close to $M(\mathcal{C}^{(k)}(i) )$ with probability close to 1 as $\epsilon \downarrow 0$. In this case, $M(\mathcal{C}^{(k)}(i) )$ is called the metastable state for a given initial point $x$ and a time scale $T_\lambda(\epsilon)$. If $e(\mathcal{C}^{(k-1)}(i) )<\lambda<e(\mathcal{C}^{(k)}(i) )$ but $\lambda <r(\mathcal{C}^{(k)}(i) )$, then a bit more sophisticated procedure for calculation of the metastable state was suggested. The Markov chains introduced in Section 3 in the case of generic systems have just one non-zero non-diagonal element in each row of their transition matrix. These chains are closely related to the cycles.

If system \eqref{03} has a rough symmetry, then to find the long-time behavior, one should introduce the hierarchy of Markov chains related to this system. For each $K_i$, consider a sequence of Markov chains of growing rank: 
\begin{equation}\label{14}
\{i\}=\mathcal{E}^{(0)}(i) \subseteq \mathcal{E}^{(1)}(i) \subseteq ...\mathcal{E}^{(m)}(i).
\end{equation}
Here $m$ is such that $\mathcal{E}^{(m)}(i)$ contains all $j \in \mathcal{L}$; $m=m(i)$. For each $\mathcal{E}^{(k)}(i)$, consider the numbers $r(\mathcal{E}^{(k)}(i))=r_k(i)$, $m_j(\mathcal{E}^{(k)}(i))=m_{k,j}(i)$, $e(\mathcal{E}^{(k)}(i))=e_k(i)$. Let $M(\mathcal{E}^{(k)}(i))=\{j \in \mathcal{E}^{(k)}(i): m_{k,j}(i)=0\}.$ The numbers $0, e_1(i), ..., e_m(i)=\infty$  form a non-decreasing sequence.

\begin{thm}\label{thm02}
Let the initial point $x$ of the process $X^\epsilon_t$ defined by \eqref{03} be attracted to $K_i$. Let $\lim_{\epsilon \downarrow 0} \epsilon \ln T_\lambda(\epsilon)=\lambda>0$. Assume that $e_{k-1}(i)<\lambda<e_k(i)$ and $r_k(i)<\lambda$. Then, for any $\delta>0$, 
\[\lim_{\epsilon \downarrow 0} P_x\left\{X^\epsilon_{T_\lambda(\epsilon)} \in U_\delta\left(M(\mathcal{E}^{(k)}(i))\right)\right\}=1,\]
where $U_\delta\left(M(\mathcal{E}^{(k)}(i))\right)=\{x \in \RR^n : \min_{j \in M(\mathcal{E}^{(k)}(i)) }|x-K_j|<\delta\}$ is the $\delta$-neighborhood of the set $M(\mathcal{E}^{(k)}(i))$ in $\RR^n$.
\end{thm}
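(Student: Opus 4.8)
The plan is to transfer the statement from the continuous diffusion $X^\epsilon_t$ to the embedded chain $Z^\epsilon_n$ together with the hierarchy of Section~3, and then to induct on the rank $k$, with the three parts of Lemma~\ref{lem01} supplying, respectively, trapping, mixing and concentration. First I would invoke the standard correspondence from \cite{05}, Ch.~6: let $0=\tau^\epsilon_0<\tau^\epsilon_1<\cdots$ be the successive Markov times at which $X^\epsilon_t$ completes a passage from a small neighborhood of one attractor to that of another, and let $Z^\epsilon_n$ be the index of the attractor occupied at $\tau^\epsilon_n$. Then $Z^\epsilon_n$ is, up to sub-exponential corrections, the chain of part~(3) of Lemma~\ref{lem01} with $p^\epsilon_{ij}=\exp\{-V_{ij}/\epsilon\}$; the holding time near $K_j$ is logarithmically equivalent to $\exp\{\alpha_j/\epsilon\}$; and $X^\epsilon_t$ stays in the prescribed neighborhood of $K_{Z^\epsilon_n}$ on $[\tau^\epsilon_n,\tau^\epsilon_{n+1})$. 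Since the self-loop and jump probabilities of the chains $\mathcal{E}_k$ are set up so that one step corresponds to one unit of diffusion time, this identifies continuous time $T_\lambda(\epsilon)$ with the step count $[T_\lambda(\epsilon)]$ of Lemma~\ref{lem01}, and reduces the assertion to showing that the hierarchy, started at $i=i(x)$ and observed at step $[T_\lambda(\epsilon)]$, sits in $M(\mathcal{E}^{(k)}(i))$ with probability tending to $1$.

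I would then prove this discrete statement by induction on $k$, using at each level the analogue of Lemma~\ref{lem01} for the rank-$k$ chain (whose characteristics $r(\cdot)$, $m(\cdot)$, $e(\cdot)$ are defined exactly as for $1$-chains). The hypothesis $\lambda<e_k(i)$, together with part~(3) (exit time logarithmically equivalent to $\exp\{e_k(i)/\epsilon\}$), shows that with probability tending to $1$ the process has not left $\mathcal{E}^{(k)}(i)$ by time $T_\lambda(\epsilon)$, while $e_{k-1}(i)<\lambda$ guarantees that it has already made many passages among the constituent $(k-1)$-chains, so the coarse-grained motion is genuinely that of the rank-$k$ chain. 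The hypothesis $r_k(i)<\lambda$ and part~(2) then force the distribution of that rank-$k$ chain at step $[T_\lambda(\epsilon)]$ to be within $o(1)$ of its stationary distribution $\mathbf{q}^\epsilon(\mathcal{E}^{(k)}(i))$, uniformly in the starting state. Finally part~(1) gives $\lim_{\epsilon\downarrow0}\epsilon\ln q^{k,\epsilon}_{\mathcal{C}}=m_{\mathcal{C}}(\mathcal{E}^{(k)}(i))\le0$ for each constituent $(k-1)$-chain $\mathcal{C}$, so that all mass escaping exponential decay sits on the main $(k-1)$-chains, those with $m_{\mathcal{C}}(\mathcal{E}^{(k)}(i))=0$.

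To pass from ``main $(k-1)$-chains'' to the set $M(\mathcal{E}^{(k)}(i))\subseteq\mathcal{L}$ appearing in the theorem I would use the monotonicity $r_1(i)\le r_2(i)\le\cdots\le r_k(i)<\lambda$, which holds because each convergence rate $r_k(i)=\max_{\mathcal{C}}e(\mathcal{C})$ is a maximum of exit rates of the constituent chains one rank below, and each such exit rate dominates the corresponding convergence rate (the term $V_{jm}-\alpha_j$ in the definition of $e(\cdot)$ being nonnegative). Hence within every main $(k-1)$-chain the time $T_\lambda(\epsilon)$ also exceeds its mixing scale, the inductive hypothesis applies, and the conditional distribution there is again at stationarity and concentrated on its own main subset. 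Iterating down the ranks, the mass concentrates exactly on those $j\in\mathcal{L}$ whose composite invariant-measure rate $m_{k,j}(i)$ vanishes at every level, which is precisely $M(\mathcal{E}^{(k)}(i))$; since $X^\epsilon_t$ lies in a $\delta$-neighborhood of $K_j$ whenever the embedded chain sits at $j$, the claimed limit follows.

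I expect the main obstacle to be the rigorous time-scale matching: one must control the random holding and passage times so that ``continuous time $T_\lambda(\epsilon)$'' corresponds cleanly to a step count at which parts~(2) and~(3) apply simultaneously at the relevant rank, ruling out (with probability tending to one) that accumulated fluctuations of the holding times either delay mixing beyond $T_\lambda(\epsilon)$ or provoke a premature exit from $\mathcal{E}^{(k)}(i)$. This is exactly where the strict inequalities $e_{k-1}(i)<\lambda$ and $r_k(i)<\lambda<e_k(i)$ enter, the margins absorbing the sub-exponential discrepancies between the diffusion and its embedded chain; verifying these estimates uniformly across all ranks $\le k$ is the technically delicate point, for which the arguments of \cite{05}, Ch.~6 and the exit estimates behind Lemma~\ref{lem01} provide the template.
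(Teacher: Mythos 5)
Your proposal is correct and takes essentially the same route as the paper: the paper's entire proof consists of invoking Lemma~\ref{lem01} together with ``the same arguments as in Section 6.5 of \cite{05}'', which is precisely the embedded-chain correspondence and the rank-by-rank trapping (part (3)), mixing (part (2)), and concentration (part (1)) argument you spell out, including the monotonicity $r(\mathcal{C})\le e(\mathcal{C})$ that makes the recursion close. The only imprecision is your appeal to ``the inductive hypothesis'' inside the main $(k-1)$-chains, where the theorem's own hypothesis $\lambda<e(\mathcal{C})$ fails and one really needs a conditional version of the concentration statement (relaxation within each sojourn, since $r(\mathcal{C})\le e(\mathcal{C})$); this is exactly the level of detail the paper itself delegates to \cite{05}.
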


\begin{proof}
Taking into account Lemma~\ref{lem01}, the proof of Theorem~\ref{thm02} can be carried out using the same arguments as in Section 6.5 of \cite{05}.
\end{proof}

For any $\lambda>0$, except a finite set, one can find $k$ such that $\lambda \in \left( e(\mathcal{E}^{(k-1)}(i)),e(\mathcal{E}^{(k)}(i))\right)$. But the second assumption of Theorem~\ref{thm02} that $r(\mathcal{E}^{(k)}(i))<\lambda$ is, of course, satisfied not always. We will show now, how the general case can be reduced to the situation considered in Theorem~\ref{thm02}. 

Let, first, the numbers $V_{ij}$ be such that the rank 1 chain $\mathcal{E}^{(1)}(i)$ contains all points of $\mathcal{L}$. In this case $\mathcal{E}^{(1)}(i)=\mathcal{E}$ is the same for all $i \in \mathcal{L}$, and the maximal rank in sequence \eqref{14} $m(i)=m=1.$ Assume that $r(\mathcal{E})>\lambda$. Consider  the arrows in the set $\mathcal{E}$ defined by the numbers $V_{ij}$. Let $\mathcal{A}(i,\lambda)=\{j \in \mathcal{E} : \text{ a sequence of arrows } (m \rightarrow n)\text{ leading from } i \text{ to } j \text{ exists such that } V_{mn}<\lambda \}$. The set $\mathcal{E}\backslash \mathcal{A}(i,\lambda)$ is not empty because of our assumption $r(\mathcal{E})>\lambda$. Put $M(i,\lambda)=\{k \in \mathcal{A}(i,\lambda): \min_{j \in \mathcal{L}, j \neq k} V_{kj}>\lambda\}$. The set $M(i,\lambda)$ is also not empty since if it is empty, $\mathcal{E}$ will not be a rank 1 chain.

Using standard large deviation estimates, one can check that the process $X^\epsilon_t$ (with $X^\epsilon_0=x$ attracted to $K_i$) hits any neighborhood of the set $M(i,\lambda) \in \RR^n$ (we identify the numbers $i$ and the points $K_i$) before time $T_{\lambda_1}(\epsilon)$, $\lim_{\epsilon \downarrow 0} \epsilon \ln T_{\lambda_1}(\epsilon)=\lambda_1<\lambda$, with probability close to 1 as $\epsilon \downarrow 0$. Moreover, $X^\epsilon_t$ will stay in the union of basins of attractors $K_j$, $j \in M(i,\lambda)$ till $T_\lambda(\epsilon)$ since for a switch to the basin of another attractor, one needs time larger than $\exp \left\{\frac{1}{\epsilon}\min_{k \in M(i,\lambda), j \neq k} V_{kj}\right\}$. We summarize this in the following lemma.

\begin{lem}\label{lem02}

Let the numbers $V_{ij}$ be such that $\mathcal{E}_1(i)=\mathcal{E}=\mathcal{L}$; the set $\mathcal{E}$ is provided with corresponding system of arrows and with numbers $\alpha_i=\min_{k,k\neq i}V_{ik}$. Let $r(\mathcal{E})=\max_{j \in \mathcal{E}} \alpha_j$ be the convergence rate for the chain $\mathcal{E}$. Let $\lim_{\epsilon \downarrow 0} \epsilon \ln T_\lambda(\epsilon)=\lambda>0$. Put $M(i,\lambda)=\{j \in \mathcal{E} : \text{ there exists a sequence of arrows } (i=i_0 \rightarrow i_1), (i_1 \rightarrow i_2),...,(i_{d-1}\rightarrow j)\text{ leading from } i \in \mathcal{E} \text{ to } j \text{ with } \alpha_{i_k}<\lambda \text{ for } k=0,1,...,d-1 \text{ and } \alpha_j> \lambda \}.$ If $\alpha_i>\lambda$, put $M(i,\lambda)=\{ i \}$. Assume that $r(\mathcal{E})>\lambda$. Then $M(i,\lambda)$ is not empty. Denote by $U_\delta(M(i,\lambda))$ the $\delta$-neighborhood of $M(i,\lambda)$ in $\RR^n$. Then $\lim_{\epsilon \downarrow 0} P_x\{X^\epsilon_{T_\lambda(\epsilon)} \in U_\delta(M(i,\lambda)) \}=1$ for any $\delta>0$.

\end{lem}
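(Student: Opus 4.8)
The plan is to split $[0,T_\lambda(\epsilon)]$ into a short initial phase, during which $X^\epsilon_t$ cascades down to the set $M(i,\lambda)$, and a long remaining phase, during which it is trapped near whichever attractor $K_j$, $j\in M(i,\lambda)$, it has reached. First I would record that $M(i,\lambda)\neq\emptyset$: since $r(\mathcal{E})>\lambda$ there is at least one $j$ with $\alpha_j>\lambda$, and because $\mathcal{E}=\mathcal{L}$ is a rank $1$ chain it is strongly connected by arrows, so from $i$ there is an arrow path to such a $j$; truncating this path at its first state with $\alpha>\lambda$ produces a witness of the form required in the definition of $M(i,\lambda)$ (if already $\alpha_i>\lambda$ the path is empty and $M(i,\lambda)=\{i\}$).

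For the reaching phase I would use the Freidlin--Wentzell estimates for the exit time and exit location from the neighbourhood of a single attractor (\cite{05}, Ch.~4 and Ch.~6), the attractor sequence being the embedded chain $Z^\epsilon_n$ of Lemma~\ref{lem01}(3). Assume $\alpha_i<\lambda$ (when $\alpha_i>\lambda$ this phase is vacuous and only the trapping estimate below is needed). After an initial relaxation of negligible length, $X^\epsilon_t$ reaches $K_i$; let $\mathcal{A}_i$ be the set of states reachable from $i$ along arrows through states with $\alpha<\lambda$, so that $\beta:=\max_{m\in\mathcal{A}_i}\alpha_m<\lambda$. From any $m\in\mathcal{A}_i$ the exit time $\tau_m$ from the basin of $K_m$ satisfies $P\{\tau_m>\exp\{(\beta+\eta)/\epsilon\}\}\to0$, while the exit occurs into the basin of one of the arrow targets (the minimizers of $V_{m\cdot}$) with probability tending to $1$, the total probability of a transition to a non-minimizer being $O(\exp\{-c/\epsilon\})$. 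Hence, with probability tending to $1$, $X^\epsilon_t$ follows an arrow path inside $\mathcal{A}_i$ and leaves $\mathcal{A}_i$ only through an arrow into a high-$\alpha$ state, which by definition lies in $M(i,\lambda)$. The number of transitions before this exit is tight as $\epsilon\downarrow0$, since the limiting embedded chain on $\mathcal{A}_i$ reaches $M(i,\lambda)$ in finitely many steps (by the strong connectivity of $\mathcal{E}$ every state of $\mathcal{A}_i$ admits an arrow path to a high-$\alpha$ state, hence to $M(i,\lambda)$). Summing the finitely many residence times then shows that $X^\epsilon_t$ reaches the basin of some $K_j$, $j\in M(i,\lambda)$, before $T_{\lambda_1}(\epsilon)$ for any fixed $\lambda_1\in(\beta+\eta,\lambda)$, with probability tending to $1$.

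For the trapping phase I would use the complementary lower bound on the exit time: since $\alpha_j>\lambda$ for $j\in M(i,\lambda)$, the exit time $\tau_j$ from the basin of $K_j$ satisfies $P\{\tau_j<\exp\{(\alpha_j-\eta)/\epsilon\}\}\to0$, and choosing $\eta$ so small that $\alpha_j-\eta>\lambda$ gives $P\{\tau_j<T_\lambda(\epsilon)\}\to0$. Thus, once reached, $K_j$ is not left before $T_\lambda(\epsilon)$, the trajectory spends the whole remaining interval in a $\delta$-neighbourhood of $K_j\subseteq U_\delta(M(i,\lambda))$, and combining the two phases gives $\lim_{\epsilon\downarrow0}P_x\{X^\epsilon_{T_\lambda(\epsilon)}\in U_\delta(M(i,\lambda))\}=1$. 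The point I expect to require the most care is the control of the reaching phase: I must bound the number of transitions uniformly, ruling out long cycling among the low-$\alpha$ states, and I must cope with the non-uniqueness of the exit location caused by the rough symmetry, namely that a state may have several minimizers whose limiting split is not determined by the $V_{ij}$. The decisive simplification is that the conclusion only asserts membership in the \emph{union} $M(i,\lambda)$, so this split never has to be resolved: every high-$\alpha$ state that can be reached along low-$\alpha$ arrows already belongs to $M(i,\lambda)$, and it suffices that the process lands in one of them.
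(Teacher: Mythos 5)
Your two--phase decomposition (a fast cascade along arrows to $M(i,\lambda)$, then trapping near an attractor $K_j$ with $\alpha_j>\lambda$) is exactly the argument the paper sketches in the paragraph preceding the lemma, and your non-emptiness argument via strong connectivity of the rank~1 chain is the same as the paper's. However, one step of your reaching phase would fail as stated: the claim that the number of embedded-chain transitions before hitting $M(i,\lambda)$ is \emph{tight}, justified by ``the limiting embedded chain on $\mathcal{A}_i$ reaches $M(i,\lambda)$ in finitely many steps.'' Under rough symmetry such a limiting chain need not exist, and, worse, the split of the exit distribution among the several minimizers is governed by pre-exponential factors which the numbers $V_{ij}$ do not control; nothing prevents the probability of the one arrow that leads toward $M(i,\lambda)$ from tending to $0$ (subexponentially) as $\epsilon \downarrow 0$. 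Concretely, take states $1,2$ with $\alpha_1,\alpha_2<\lambda$, arrows $1\to 2$, $2\to 1$, $2\to 3$ with $\alpha_3>\lambda$, and $P_\epsilon(2\to 3)\to 0$: every subsequential limiting chain cycles between $1$ and $2$ forever, and the number of transitions before reaching $3$ is of order $1/P_\epsilon(2\to 3)\to\infty$, so it is not tight. Your closing observation that ``only the union matters'' resolves the landing-location ambiguity caused by rough symmetry, but not this timing issue, which is precisely where the delicacy lies.

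The gap is repairable within your framework, and the repair is what the paper's appeal to ``standard large deviation estimates'' amounts to. Since an arrow $(m\to n)$ means $V_{mn}-\alpha_m=0$, the Freidlin--Wentzell exit theory gives $\lim_{\epsilon\downarrow 0}\epsilon\ln P_\epsilon(m\to n)=0$, i.e.\ every arrow probability exceeds $\exp\{-\eta/\epsilon\}$ for any fixed $\eta>0$ and small $\epsilon$. Hence a fixed arrow path of length at most $l$ from the current state to $M(i,\lambda)$ is followed with probability at least $\exp\{-l\eta/\epsilon\}$, so with probability tending to one the chain reaches $M(i,\lambda)$ within $\exp\{2l\eta/\epsilon\}$ steps (geometric trials); transitions off the arrow system are excluded by a union bound over these steps, since each such transition has probability $\exp\{-c/\epsilon\}$ with $c>0$ fixed, provided $2l\eta<c$. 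Each step lasts at most $\exp\{(\beta+\eta)/\epsilon\}$ with high probability, where $\beta=\max_{m\in\mathcal{A}_i}\alpha_m<\lambda$, so the total reaching time is at most $\exp\{(\beta+(2l+1)\eta)/\epsilon\}\ll T_\lambda(\epsilon)$ once $\eta$ is small enough. Replacing your tightness claim by this subexponential bound, the rest of your argument (in particular the trapping phase, which is correct as written) goes through and coincides with the paper's intended proof.
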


An algorithm for the construction of the metastable set in general case can be described by induction in the rank. Lemma~\ref{lem01} gives the answer in the case when the highest rank is 1 and $r(\mathcal{E})>\lambda$. Suppose we can construct the metastable set in the case when the rank is less than $k$. Let $e(\mathcal{E}^{(k-1)}(i))<\lambda<e(\mathcal{E}^{(k )}(i))$. If $r(\mathcal{E})<\lambda$, the answer is given by Theorem~\ref{thm02}, so that let $r(\mathcal{E})>\lambda$. The states of $k$-chain $\mathcal{E}^{(k)}(i)$ are $(k-1)$-chains $\mathcal{E}^{(k-1)}_1$, $\mathcal{E}^{(k-1)}_2$,..., $\mathcal{E}^{(k-1)}_d$. Assume that $i \in \mathcal{E}^{(k-1)}_1$. Similar to the construction of the set $M(i,\lambda)=M^1(i,\lambda)$ in the case of rank 1 chains, one  can find $(k-1)$ chains in $\mathcal{E}^{(k)}(i)$ which can be entered before and cannot be left at time $\exp\{\frac{\lambda}{\epsilon} \}$. Let $M^k(1,\lambda)$ be the collection of all such $(k-1)$-chains. One can also describe the entry points for each of these $(k-1)$-chains. Then the metastable set for initial point $x$ attracted to $K_i$ in time scale $T_\lambda(\epsilon)$, $\lim_{\epsilon \downarrow 0} \epsilon \ln T_\lambda(\epsilon)=\lambda$, can be described as the union of metastable sets for given $x$ and $\lambda$ inside each of the $(k-1)$-cycles from $M^k(x,\lambda)$. The latter can be described due to the induction assumption.\\

Consider some examples. First, consider briefly the system studied in Section 2. There are three asymptotically stable states in this system and the unique rank 1 chain $\mathcal{E}_1^{(1)}$ contains all three points. Corresponding transition matrix and system of arrows are shown in Figure 3.\\\\

\begin{figure}[h]
\centering
\begin{picture}(330, 140)
    \put(0,0){\includegraphics[scale=0.5]{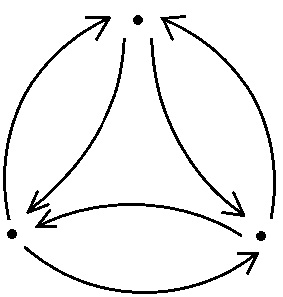}}
	\put(-5,75){$\alpha_1$}	
	\put(50,-8){$\alpha_1$}	
	\put(37,55){$\alpha_2$}		
	\put(55,55){$\alpha_2$}	
	\put(50,25){$\alpha_3$}
	\put(100,75){$\alpha_3$}
			
	\put(-5,15){1}	
	\put(50,108){2}		
	\put(102,15){3}	
	
	\put(170,50){\Large $\vb{1-2e^{-\frac{\alpha_1}{\epsilon}} & e^{-\frac{\alpha_1}{\epsilon}} &e^{-\frac{\alpha_1}{\epsilon}}}{e^{-\frac{\alpha_2}{\epsilon}}&1-2e^{-\frac{\alpha_2}{\epsilon}}&e^{-\frac{\alpha_2}{\epsilon}}}{e^{-\frac{\alpha_3}{\epsilon}}&e^{-\frac{\alpha_3}{\epsilon}}&1-2e^{-\frac{\alpha_3}{\epsilon}}}$}
\end{picture}
\caption*{Figure 3}
\end{figure}

Taking into account that $\alpha_1<\alpha_2<\alpha_3$, we have: $r(\mathcal{E}_1^{(1)})=\alpha_3$, $m_i(\mathcal{E}_1^{(1)})=\alpha_i-\alpha_3$ for $i \in \{1,2,3\}$, $e(\mathcal{E}_1^{(1)})=\infty$, and  $M(\mathcal{E}_1^{(1)})=\{3\}$. Metastable sets for this system were described in Section 2. 

Note that in the case of the system shown in Figure 1, the non-perturbed trajectories started at points $x \in G_4$ are not attracted to any stable equilibrium. But the perturbed trajectories approach one of asymptotically stable equilibriums relatively fast (in a time of order $\ln \epsilon^{-1}$ as $\epsilon \downarrow 0$). The last statement of Theorem~\ref{thm01} gives the distribution between the attractors which should be considered as an initial distribution for exponentially long time evolution.

Consider another example. Let system~\eqref{01} have $l=5$ asymptotically stable equilibriums (Figure $4^a$). If there is an arrow $(i \rightarrow j)$ in Figure $4^a$, then $V_{ij}=\min_{k:k\neq i}V_{ik}=:\alpha_i$. Assume that $\alpha_1<\alpha_2<...<\alpha_5$. If no arrow leads from $i$ to $j$, $i \neq j$, then $V_{ij}>\alpha_5$. 

\begin{figure}[h]
\centering
\begin{picture}(400, 130)
    \put(0,0){\includegraphics[scale=0.6]{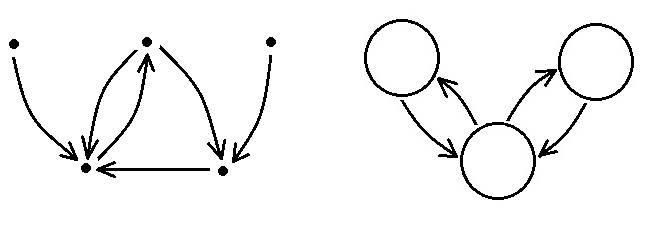}}
	\put(2,45){$\alpha_5$}	
	\put(34,62){$\alpha_1$}	
	\put(93,62){$\alpha_1$}			
	\put(63,47){$\alpha_3$}	
	\put(68,17){$\alpha_2$}
	\put(123,52){$\alpha_4$}
			
	\put(66,88){1}	
	\put(102,14){2}			
	\put(38,14){3}	
	\put(120,87){4}	
	\put(7,87){5}	

	\put(211,60){$e_1$}	
	\put(227,66){$e_1$}	
	\put(260,39){$\alpha_4$}	
	\put(178,39){$\alpha_5$}

	\put(174,74){$\mathcal{E}^{(1)}_3$}	
	\put(217,25){$\mathcal{E}^{(1)}_1$}	
	\put(261,72){$\mathcal{E}^{(1)}_2$}

	\put(295,60){$\displaystyle e_1=\alpha_3+\min_{\begin{smallmatrix}
	 j \in \{1,2,3\}, \\
	k \in \{4,5\}
	\end{smallmatrix} } (V_{jk}-\alpha_j)$}	
	
	\put(70,0){(a)}	
	\put(300,0){(b)}	

\end{picture}
\caption*{Figure 4}
\end{figure}

There are three chains of rank 1 in Figure $4^a$: $\mathcal{E}_1^{(1)}=\{1,2,3\}$, and two chains consisting just of one state, $\mathcal{E}^{(1)}_2=\{4\}$ and $\mathcal{E}^{(1)}_3=\{5\}$. It is easy to calculate that $r(\mathcal{E}_1^{(1)})=\alpha_3$, $m_i(\mathcal{E}_1^{(1)})=\alpha_i-\alpha_3$ for $i \in \{1,2,3\}$, $M(\mathcal{E}_1^{(1)})=\{j: m_j(\mathcal{E}_1^{(1)})=0\}=\{3\}$, $e(\mathcal{E}_1^{(1)})=\alpha_3+\min_{j \in \{1,2,3\}, k \in \{4,5\}}(V_{jk}-\alpha_j)=:e_1$; $r(\mathcal{E}_2^{(1)})=r(\mathcal{E}_3^{(1)})=0$, $m_4(\mathcal{E}_2^{(1)})=m_5(\mathcal{E}_3^{(1)})=0$, $M(\mathcal{E}_2^{(1)})=\{4\}$, $M(\mathcal{E}_5^{(1)})=\{5\}$, $e(\mathcal{E}_2^{(1)})=\alpha_4$, $e(\mathcal{E}_3^{(1)})=\alpha_5$.

The 1-chains $\mathcal{E}_1^{(1)},\mathcal{E}_2^{(1)},\mathcal{E}_3^{(1)}$ form a rank 2 chain $\mathcal{E}_1^{(2)}=(\mathcal{E}_1^{(1)},\mathcal{E}_2^{(1)},\mathcal{E}_3^{(1)})$ (Figure $4^b$). Letters attached to the arrows on Figure $4^b$ define corresponding transition probabilities in the chain $\mathcal{E}_1^{(2)}$: $p^\epsilon_{\mathcal{E}_1^{(1)}\mathcal{E}_3^{(1)}}=p^\epsilon_{\mathcal{E}_1^{(1)}\mathcal{E}_2^{(1)}}=\exp\{-\epsilon^{-1}e_1\}$, $p^\epsilon_{\mathcal{E}_2^{(1)}\mathcal{E}_1^{(1)}}=\exp\{-\epsilon^{-1}\alpha_4\}$, $p^\epsilon_{\mathcal{E}_3^{(1)}\mathcal{E}_1^{(1)}}=\exp\{-\epsilon^{-1}\alpha_5\}$.

Assume that the initial point $X^\epsilon_0=x$ of process \eqref{03} is attracted to $K_1$. Then $X^\epsilon_t$, first, will approach $K_1$ with probability close to 1 as $\epsilon \downarrow 0$ and stay in the basin $D_1$ of $K_1$ till time $T_\lambda(\epsilon)$, $\lim_{\epsilon \downarrow 0} \epsilon \ln T_\lambda(\epsilon)=\lambda$, if $\lambda <\alpha_1$. Most of the time, $X^\epsilon_t$ will stay in a small neighborhood of $K_1$ visiting time to time other parts of $D_1$. If $\alpha_1<\lambda<\alpha_2$, $X^\epsilon_{T_\lambda(\epsilon)}$ belongs to the union of basins $D_2$ and $D_3$ of the attractors $K_2$ and $K_3$. To calculate the distribution between $D_2$ and $D_3$, one should know the pre-exponential factor in transition probabilities asymptotics or use an approach similar to one considered in Section 2.

If $\alpha_2<\lambda<e_1$, the trajectory $X^\epsilon_t$ will reach $D_3$ before time $T_\lambda(\epsilon)$ and stay in $D_3$ till time $T_\lambda(\epsilon)$ with probability close to 1 as $\epsilon \downarrow 0$. So that, if $\alpha_2<\lambda<e_1$ and $X^\epsilon_0=x \in D_1$, $K_3$ will be the metastable state.

If $\lambda>e_1$, trajectory $X^\epsilon_t$ leaves $D_1 \cup D_2 \cup D_3$ before time $T_\lambda(\epsilon)$ as $\epsilon \downarrow 0$. Where $X^\epsilon_{T_\lambda(\epsilon)}$ is situated depends on relation between $e_1$ and $\alpha_4$: if $e_1<\lambda<\alpha_4$, $X^\epsilon_{T_\lambda(\epsilon)}$ belongs to the union $D_4 \cup D_5$ of basins of $K_4$ and $K_5$; if $\alpha_4<e_1<\lambda$, $X^\epsilon_{T_\lambda(\epsilon)}$ is situated near $K_5$ with probability close to 1 as $\epsilon \downarrow 0$.\\

Finally, one should note that the long-time behavior of systems with other types of noise can be studied in a similar way.

\bibliographystyle{plain}
\bibliography{paper}

\begin{thebibliography}{1}

\bibitem{01}
M.~Brin and M.~Freidlin.
\newblock {On stochastic behavior of perturbed Hamiltonian systems}.
\newblock {\em Ergodic Theory and Dynamical Systems}, 20(1):55--76, 2000.

\bibitem{02}
J.~L. Doob.
\newblock {\em {Stochastic processes}}.
\newblock Wiley, 1953.

\bibitem{03}
M.~Freidlin.
\newblock {Sublimiting distributions and stabilization of solutions of
  parabolic equations with a small parameter}.
\newblock {\em Soviet Math. Dokl.}, 18(4):1114--1118, 1977.

\bibitem{04}
M.~Freidlin.
\newblock {Quasi-deterministic approximation, metastability and stochastic
  resonance}.
\newblock {\em Physica D}, 137:333--352, 2000.

\bibitem{05}
M.~Freidlin and A.~Wentzell.
\newblock {\em {Random perturbations of dynamical systems}}.
\newblock Springer, 3rd edition, 2012.

\bibitem{06}
M.~Krein and D.~Milman.
\newblock {On extreme points of regular convex sets}.
\newblock {\em Studia Mathematica}, 9:133--138, 1940.

\end{thebibliography}

\end{document}